\setlist[enumerate]{label={\rm(\roman*)}}
\newcommand{\R}{\mathbb{R}}
\newcommand{\N}{\mathbb{N}}
\newcommand{\Rn}{\mathbb{R}^n}
\renewcommand{\d}{{\fam0 d}}
\newcommand{\Liloc}{L^1_{\text{loc}}}
\newcommand{\eprec}{\prec\kern-3pt\prec}
\newcommand{\esucc}{\succ\kern-3pt\succ}
\newcommand{\gn}{{\frac{\gamma}{n}}}
\newcommand{\gnd}{{\gamma/n}}
\renewcommand{\ng}{{\frac{n}{\gamma}}}
\newcommand{\ngn}{{\frac{n}{n-\gamma}}}
\newcommand{\ngnd}{{n/(n-\gamma)}}
\newcommand{\nng}{{\frac{n}{\gamma-n}}}
\newcommand{\Ang}{{A_\ng}}
\newcommand{\Angi}{{A_\ng^{-1}}}
\newcommand{\Gng}{G_\ng}
\newcommand{\Gngi}{G_\ng^{-1}}
\newcommand{\Bng}{{B_\ng}}
\newcommand{\Bngi}{{B_\ng^{-1}}}
\newcommand{\Eng}{E_\ng}
\newcommand{\Engi}{E_\ng^{-1}}
\newcommand{\Fng}{F_\ng}
\newcommand{\Fngi}{F_\ng^{-1}}
\newcommand{\Asup}{{A_{\sup}}}
\newcommand{\Asupi}{{A_{\sup}^{-1}}}
\newcommand{\Gsup}{{G_{\sup}}}
\newcommand{\Gsupi}{{G_{\sup}^{-1}}}
\let\tilde\widetilde
\newtheoremstyle{MyPlain}
	{}{}{\itshape}{}
	{\bfseries}{.}
	{5pt plus 1pt minus 1pt}
	{\thmname{#1}\thmnumber{ #2}\thmnote{ \textbf{[#3]}}}
\theoremstyle{MyPlain}
\newtheorem{theorem}{Theorem}[section]
\newtheorem{corollary}[theorem]{Corollary}
\newtheorem{lemma}[theorem]{Lemma}
\newtheorem{proposition}[theorem]{Proposition}
\newtheoremstyle{MyRemark}
	{}{}{\upshape}{}
	{\bfseries}{.}
	{5pt plus 1pt minus 1pt}
	{\thmname{#1}\thmnumber{ #2}\thmnote{ \textbf{[#3]}}}
\theoremstyle{MyRemark}
\newtheorem{remark}[theorem]{Remark}
\newtheorem{example}[theorem]{Example}
\let\expandafter\oldproof\csname\string\proof\endcsname
\let\oldendproof\endproof
\renewenvironment{proof}[1][\proofname]{%
  \oldproof[{{\bf #1}}]%
}{\oldendproof}
\numberwithin{equation}{section}
\begin{document}

\title{Fractional Maximal Operator in Orlicz spaces}

\begin{abstract}
We study the fractional maximal operator acting between Orlicz spaces.  We
characterise whether the operator is bounded between two given Orlicz spaces.
Also a necessary and sufficient conditions for the existence of an optimal
target and domain spaces are established and the explicit formulas of
corresponding optimal Young functions are presented.
\end{abstract}

\author{V\'\i t Musil}
\address{Department of Mathematical Analysis,
Faculty of Mathematics and Physics,
Charles University,
So\-ko\-lo\-vsk\'a~83,
186~75 Praha~8,
Czech Republic}
\email{musil@karlin.mff.cuni.cz}
\urladdr{0000-0001-6083-227X}

\subjclass[2000]{}
\keywords{%
Fractional maximal operator,
Orlicz spaces,
reduction theorem,
optimality.}

\thanks{%
This research was partly supported
by the grant P201-18-00580S of the Grant Agency of the Czech Republic,
by the Charles University, project GAUK No.~33315
and by the grant 8X17028 of the Czech Ministry of Education}

\date{\today}

\maketitle
\bibliographystyle{plain}

\section{Introduction}

Let $n\in\N$ and $0<\gamma<n$ be fixed.
The fractional maximal operator $M_\gamma$ is defined
for any locally integrable function $f$ in $\Rn$ by
\[
 M_\gamma f(x) = \sup_{Q\ni x} |Q|^{\ng-1} \int_{Q} |f(y)|\,\d y,
 	\quad x\in \Rn,
\]
where the supremum is taken over all cubes $Q$ containing $x$ and having the sides
parallel to the coordinate axes.

Our first aim is to analyse the boundedness of $M_\gamma$ acting between Orlicz spaces.
More specifically, given Orlicz spaces $L^A(\Rn)$ and $L^B(\Rn)$, we want to
decide whether
\begin{equation} \label{eq:MgAB}
	M_\gamma \colon L^A(\Rn) \to L^B(\Rn).
\end{equation}
We show that \eqref{eq:MgAB} is equivalent to one-dimensional inequalities
involving only the Young functions $A$ and $B$. Resulting inequality is then
much more easier to verify. Such a simplification is often called reduction principle.
Similar reductions in Orlicz spaces have already appeared in the literature for various types
of operators, let us mention for instance the reduction principle
for the Hardy-Littlewood maximal operator \citep{Kit:97}, fractional integrals and Riesz
potential \citep{Cia:99a}, Sobolev and Poincar\'e inequalities \citep{Cia:06},
Sobolev embeddings \citep{Cia:96,Cia:18}, Sobolev trace embeddings \citep{Cia:10}
or Korn type inequalities \citep{Cia:14a}.
The reduction principle for fractional maximal operator under
some restrictive assumptions can be found in \citep{Har:02}.
Our general result is stated in Theorem~\ref{thm:Orlicz_reduction}.

The principal innovation of this paper however lays
in the description of optimal Orlicz spaces
in \eqref{eq:MgAB}.
More
specifically, given $L^A(\Rn)$, we seek for the smallest Orlicz space
$L^B(\Rn)$ such that \eqref{eq:MgAB} holds. By ``smallest'' we mean that if
\eqref{eq:MgAB} holds with $L^B(\Rn)$ replaced with another Orlicz space
$L^{\widehat B}(\Rn)$, then $L^B(\Rn) \to L^{\widehat B}(\Rn)$. Instead of
smallest we also often say optimal.

Let us briefly look at the situation in the class of the Lebesgue spaces.
It is well known that
\[
	M_\gamma\colon L^p(\Rn) \to
	\begin{cases}
		L^{\frac{np}{n-\gamma p}}(\Rn),
			& 1<p<\tfrac n\gamma,
				\\
		L^\infty(\Rn),
			& p=\tfrac n\gamma,
	\end{cases}
\]
and this result is sharp within Lebesgue spaces.
However, there is no Lebesgue space $L^q(\Rn)$
for which
\[
	M_\gamma\colon L^1(\Rn) \to L^q(\Rn).
\]
The situation in the class of Orlicz spaces is much more subtle
and not many results are available in the literature.
The authors of \citep{Edm:02}
characterised the boundedness of $M_\gamma$ (they work with more general
operator, in fact) on classical Lorentz spaces. In the cases when such spaces
coincide with Orlicz spaces, we may recover the following result
(see Section~\ref{subsec:Orlicz} for the definitions of the spaces involved).
\begin{equation} \label{ex:EO}
	M_\gamma\colon L^p (\log L)^{\mathbb A}(\Rn) \to
		\begin{cases}
			L^{\ngn}(\log L)^{\frac{n\mathbb A}{n-\gamma} - 1}(\Rn),
				& \text{$p=1$, $\alpha_0<0$, $\alpha_\infty > 0$},
				\\
			L^{\frac{np}{n-\gamma p}} (\log L)^{\frac{n\mathbb A}{n-\gamma p}}(\Rn),
				& \text{$1<p<\tfrac{n}{\gamma}$},
				\\
			\exp L^{-\frac{n}{\gamma\mathbb A}}(\Rn),
				& \text{$p=\tfrac{n}{\gamma}$, $\alpha_0>0$, $\alpha_\infty < 0$},
		\end{cases}
\end{equation}
however, it does not say anything about its sharpness.
In Theorem~\ref{thm:OOT}, we give the complete characterization
of the existence of the optimal Orlicz target and, in the
affirmative case, we give its full description.
It turns out that the spaces obtained in \eqref{ex:EO}
are optimal in the cases when $p>1$. If $p=1$, then
the target space $L^{\ngn}(\log L)^{\frac{n\mathbb A}{n-\gamma} - 1}(\Rn)$
is not the best possible Orlicz space and even more,
the optimal Orlicz target does not exist in this case.
This means that any Orlicz space $L^B(\Rn)$ for which
$M_\gamma\colon L^p(\log L)^{\mathbb A}(\Rn)\to L^B(\Rn)$
may be replaced by essentially smaller space, whence
there is an ``open'' set of all the eligible Orlicz spaces.
The details on this particular case are discussed in
Example~\ref{ex:LZrange}.

One can also ask the converse problem, i.e.~when the target space
 $L^{B}(\Rn)$ is given and we seek for the
largest possible $L^A(\Rn)$ rendering \eqref{eq:MgAB} true.
Analogously, by ``largest'' we mean that if \eqref{eq:MgAB} holds
with $L^A(\Rn)$ replaced by $L^{\widehat A}(\Rn)$, then 
$L^{\widehat A}(\Rn) \to L^A(\Rn)$.
Again, we shorten this notion to the word ``optimal'' since
no confusion with the above situation is likely to happen.
The solution of this task is the subject of Theorem~\ref{thm:OOD},
where we give the complete description of optimal domains.
If we return back to the example in \eqref{ex:EO}, one
gets that the Orlicz domain $L^A(\log L)^{\mathbb A}(\Rn)$
is the optimal one in all three cases. See Example~\ref{ex:LZdomain}
for further details.

The paper is organized as follows.
The main results are stated in Section~\ref{sec:main} as well as examples of particular
Orlicz spaces. In the Section~\ref{sec:back} we collect necessary preliminary material and
in Section~\ref{sec:proofs} we prove the results.

\section{Background} \label{sec:back}

We call $A\colon [0, \infty) \to [0, \infty]$  a Young function if it is  convex, left-continuous, and $A(0)=0$. Any function of this kind satisfies, in particular,
\begin{equation}\label{kt}
kA(t) \leq A(kt)
\quad \text{if $k \geq 1$ and $t \geq 0$.}
\end{equation}
Every Young function admits the representation
\[
	A(t) = \int_{0}^{t} a(s)\,\d s
		\quad \text{for $t\ge 0$,}
\]
where $a\colon[0,\infty)\to [0,\infty]$ is nondecreasing
and left-continuous function. It holds that
\begin{equation} \label{eq:A}
	A(t) \le t a(t) \le A(2t)
		\quad \text{for $t\ge 0$.}
\end{equation}
The Young conjugate 
 $\widetilde{A}$ of  $A$ is given by
\begin{equation*}
    \widetilde{A}(t)=\sup \{st-A(s): s \geq 0\} \quad \text{for $t\ge 0$.}
\end{equation*}
The  function $\widetilde{A}$ is a Young function as well, and its Young conjugate is again $A$. 
One has that
\begin{equation} \label{eq:YoungCompl}
    t\le A^{-1}(t)\,\widetilde{A}^{-1}(t)\le 2t\quad \text{for $t\ge 0$,}
\end{equation}
where $A^{-1}$ denotes the generalized right-continuous inverse of $A$. 
A Young function $A$ is said to dominate another Young function $B$ if there
exists a constant $c>0$ such that
\begin{equation*}
	B(t) \leq A(ct)
	\quad \text{for $t \geq 0$.}
\end{equation*}
The functions $A$ and $B$ are called equivalent if they dominate each other. 
A Young function $A$ is said to essentially dominate another Young function $B$ if
\begin{equation*}
	 \sup_{0<t<\infty} \frac{A(t)}{B(\lambda t)} = \infty
	\quad\text{for every $\lambda\ge 1$.}
\end{equation*}

\subsection{Boyd indices}
Given a Young function $A$, we define the function $h_A\colon (0, \infty) \to [0, \infty)$ as 
\begin{equation*}
	h_A (t) = \sup_{0<s<\infty} \frac{A^{-1}(st)}{A^{-1}(s)} 
    \quad \text{for $t>0$.}
\end{equation*}
The lower and upper Boyd indices of $A$ are then defined as
\begin{equation*} 
    i_A = \sup_{1<t<\infty} \frac{\log t}{\log h_A (t)}
        \quad\text{and}\quad
    I_A	= \inf_{0<t<1} \frac{\log t}{\log h_A (t)}\,,
\end{equation*}
respectively. One has that
\begin{equation*}
    1\le i_A \le I_A \le \infty.
\end{equation*}
It can also be shown that
\begin{equation} \label{BIdef}
    i_A	= \lim_{t\to\infty} \frac{\log t}{\log h_A (t)}
        \quad\text{and}\quad
    I_A	= \lim_{t\to 0^+} \frac{\log t}{\log h_A (t)}.
\end{equation}
\iffalse
The Boyd indices of $A$ admit an alternate expression, that does not call into
play $A^{-1}$, provided that $A$ is finite-valued. 
Define $\widehat{h}_A\colon (0, \infty) \to [0, \infty)$ as 
\begin{equation*}
	\widehat{h}_A (t) = \sup_{0<s<\infty} \frac{A(st)}{A(s)} 
    \quad \text{for $t>0$.}
\end{equation*}
Then, 
\begin{equation} \label{pre:BoydIndex}
	i_A = \sup_{0<t<1} \frac{\log\widehat{h}_A(t)}{\log t}
		\quad\text{and}\quad
	I_A = \inf_{1<t<\infty} \frac{\log\widehat{h}_A(t)}{\log t}.
\end{equation}
Furthermore,
the supremum and infimum in \eqref{pre:BoydIndex} can be replaced with the
limits as $t\to 0^+$ and $t\to\infty$, respectively.
Observe that if the function $A^{-1}(t)\,t^{-\sigma}$ is equivalent globally,
up to multiplicative positive constants,  to a non-decreasing function, for
some $\sigma \in (0,1)$, then $I_A \le 1/\sigma$.  Similarly, if the function
$A^{-1}(t)\,t^{-\sigma}$ is equivalent to a non-increasing function, then $i_A
\ge 1/\sigma$.

In the special case when $A(t)=t^p$ for some $p\geq
1$, one has that $i_A = I_A = p$.
%furthermore, if $A(t)= \infty$ for large $t$, then $i_A=I_A=\infty$. 
\fi
Further details on Boyd indices can be found for instance in \citep{Boy:71}.

\subsection{Orlicz spaces} \label{subsec:Orlicz}

Let $n\in\N$.
Denote by $\mathcal M(\Rn)$ the space of real-valued Lebesgue measurable
functions in $\Rn$. Given a 
Young function $A$, the Orlicz space $L^A(\Rn)$ is the collection  of all functions $f\in\mathcal{M}(\Rn)$ such that
\begin{equation*}
    \int_{\Rn} A\left(\frac{|f(x)|}{\lambda}\right)\,\d x <\infty
\end{equation*}
for some $\lambda >0$.
The Orlicz space $L^A(\Rn)$ is a Banach space endowed with the Luxemburg norm defined as 
\begin{equation*}
	\|f\|_{L^A(\Rn)} = \inf\left\{\lambda>0:
		\int_{\Rn}A\left(\frac{|f(x)|}{\lambda}\right)\,\d x \le 1\right\}
\end{equation*}
for $f \in \mathcal M(\Rn)$.
The choice  $A(t)=t^p$, with  $1\le p < \infty$, yields $L^A(\Rn) = L^p(\Rn)$,
the customary Lebesgue space. When $A(t)=0$ for $t \in [0,1]$ and $A(t)=\infty$
for $t \in (1, \infty)$, one has that $L^A(\Rn)=L^\infty (\Rn)$.

Orlicz space is an example of a so-called rearrangement-invariant space -- see
e.g. \cite[Section 4, Chapter 2]{Ben:88}.
Hence, given any Young function $A$, one has that 
\begin{equation*}
	\|f\|_{L^A(\Rn)} = \|f^*\|_{L^A(0,\infty)}
\end{equation*}
where $f^*$ is the nonincreasing rearrangement of $f$ given by
\[
	f^*(t)=\inf
	\bigl\{\lambda>0:
			\bigl|\bigl\{x\in\Rn: |f(x)|>\lambda\bigr\}\bigr|\le t
	\bigr\}\quad\text{for $t \geq 0$.}
\]

\iffalse
Let $E$ be a non-negligible measurable subset of $\Rn$, and let $\chi _E$ denote its characteristic function. Then 
\begin{equation}\label{chiE}
	\|\chi_E\|_{L^A(\Rn)} = \frac{1}{A^{-1}\bigl({\textstyle \frac{1}{|E|}}\bigr)}.
\end{equation}

The fundamental function $\varphi_{A}$ of $L^A(\Rn)$ is defined by
\begin{equation*}
	\varphi_{A}(s) = \frac{1}{A^{-1}\bigl(\frac{1}{s}\bigr)} \quad \hbox{for $0<s<|\Rn|$,}
\end{equation*}
and $\varphi_{A}(0)=0$.
Owing to \eqref{chiE},
\begin{equation*}
	\varphi_{A} (s) = \| \chi_E \|_{L^A(\Rn)}
\end{equation*}
for every set $E\subset \Rn$ such that
$|E|=s$.
A H\"older type inequality in Orlicz spaces asserts that
\begin{equation}\label{holder}
	\|g\|_{L^{\widetilde A}(\Rn)}
		\leq \sup_{f \in L^A(\Rn)}
			\frac{\int_{\Rn} f(x)g(x) \, \d x}{\|f\|_{L^{A}(\Rn)} }
		\leq 2
	\|g\|_{L^{\widetilde A}(\Rn)} 
\end{equation}
for every $g \in L^{\widetilde A}(\Rn)$.
\fi

The inclusion relations between Orlicz spaces can be characterized in terms of
the notion of domination between Young functions. Let $A$ and $B$ be Young functions. Then 
\[
	L^{A}(\Rn) \to L^{B}(\Rn)
	\text{ if and only if $A$ dominates $B$}
\]
and the inclusion is strict
if and only if $A$ dominates $B$ essentially.

The alternate notation $A(L)(\Rn)$ for the Orlicz space $L^A(\Rn)$ will
be adopted when convenient. In particular, if
$A(t)$ is equivalent to $t^p\,\ell(t)^{\alpha_0}$ near zero
and to $t^p\,\ell(t)^{\alpha_\infty}$ near infinity
%\[
%	A(t)
%	\text{ is equivalent to }
%	\begin{cases}
%		t^{p} (\log(1/t)+1)^{-\alpha}
%			& \text{near zero},
%			\\
%		t^{p} (\log t+1)^{\alpha}
%			& \text{near infinity},
%	\end{cases}
%\]
where
$\ell(t) = 1+|\log t|$, $t>0$,
%\[
%	\ell (t) =
%	\begin{cases}
%		\frac{1}{\log(1/t)+1},
%			& 0<t\le 1,
%			\\
%		\log t + 1,
%			& t>1,
%	\end{cases}
%\]
and either $p>1$ and $\alpha_0$, $\alpha_\infty\in\R$
or $p=1$ and $\alpha_0 \le 0$ and $\alpha_\infty \ge 0$,
then the Orlicz space $L^A(\Rn)$ is the so called Zygmund space
denoted by $L^p(\log L)^{\mathbb A}(\Rn)$, where $\mathbb A = [\alpha_0,\alpha_\infty]$.
Observe that $i_A=I_A=p$.
If
\[
	A(t)
	\text{ is equivalent to }
	\begin{cases}
		\exp(-t^{\beta_0})
			& \text{near zero},
			\\
		\exp (t^{\beta_\infty})
			& \text{near infinity},
	\end{cases}
\]
with $\beta_0<0<\beta_\infty$, then Orlicz space $L^A(\Rn)$ is the space of exponential type,
denoted by $\exp L^{\mathbb B}(\Rn)$ where $\mathbb B=[\beta_0,\beta_\infty]$.
We have $i_A=I_A=\infty$ in this case.

In some particular, rather rare, situations, we adopt the relation
$L \simeq R$ if $L$ and $R$ bound each other up to
positive multiplicative constants independent of the quantities involved in
$L$ and $R$.

\section{Main results} \label{sec:main}

\noindent
We start by introducing a crucial tool
often named as a reduction principle. Such a principle
translates the boundedness of the operator $M_\gamma$ acting between
Orlicz spaces in $\Rn$ to a much simpler one-dimensional inequality
containing only the Young functions and the parameters $n$ and $\gamma$.
That enables us to simplify our analysis and
helps us to understand the behaviour of the operator
and the spaces involved.

At first, we need to introduce several constructions
of Young functions. Their importance will then be resembled in the
following theorem.
Let $A$ be a given Young function.
Assume that 
\begin{equation} \label{eq:Acond}
	\inf _{0<t<1} A(t)\, t^{-\ng} >0
\end{equation}
and define $\Ang$ by 
\begin{equation} \label{eq:AngDef}
	\Ang(t) = \int _0^t \frac { \Gngi(s)}{s} \d s
    	\quad \text{for $t \geq 0$,}
\end{equation}
where $\Gng \colon [0, \infty ) \to [0, \infty)$ is a nondecreasing function defined by
\begin{equation*}
	\Gng (t) = \sup_{0<s\leq t} A^{-1}(s) \,s^{-\gn}
		\quad \text{for $t \geq 0$}
\end{equation*}
and $\Gngi$ represents its generalized right-continuous inverse.

\begin{remark} \label{rem:AngYoung}
Note that $\Ang$ is a Young function.
First of all, \eqref{eq:Acond} ensures that $\Gng$ is
finite near zero and $\Gng$ is well-defined.
Next, observe that
\begin{equation*}
	\frac{\Gng(t)}{t}
		= t^{-\gn} \sup_{0<y<1} \frac{A^{-1}(ty)}{ty}y^{1-\gn}.
\end{equation*}
Since $A(t)/t$ is nondecreasing, $A^{-1}(s)/s$ is nonincreasing
and thus $\Gng(t)/t$ is decreasing and $\Gngi(s)/s$ is
increasing. Therefore $\Ang$ is a Young function and moreover,
by \eqref{eq:A},
$\Ang$ is globally equivalent to $\Gngi$.
\end{remark}

Let $B$ be a given Young function.
Assume that
\begin{equation} \label{eq:Bconvergeat0}
	\int_{0} \frac{B(s)}{s^{\ngnd+1}} \,\d s < \infty,
\end{equation}
and define (with the little abuse of notation)
\begin{equation} \label{eq:BngDef}
	\Bng(t) = \int_{0}^{t} \frac{\Engi(s)}{s}\,\d s
    	\quad \text{for $t \geq 0$.}
\end{equation}
Here, $\Eng\colon[0,\infty) \to [0,\infty)$ is given by
\[
	\Eng(t) = t^\gn \Fngi(t)
    	\quad \text{for $t \geq 0$,}
\]
where $\Fng\colon[0,\infty) \to [0,\infty)$ is defined by
\begin{equation} \label{eq:FngDef}
	\Fng(t) = t^{\ngnd} \int_{0}^{t} \frac{B(s)}{s^{\ngnd+1}} \,\d s
    	\quad \text{for $t \geq 0$.}
\end{equation}

\begin{remark}
Observe that $\Bng$ is well-defined Young function.
The assumption \eqref{eq:Bconvergeat0} guarantees that
$\Fng$ is finite and increasing. Thus $\Fngi$ is increasing
and so is $\Eng$. Next, since $\Fng(t)/t^\ngnd$ is nondecreasing,
$\Fngi(t)/t^{1-\gn}$ is nonincreasing, hence
$\Eng(t)/t$ is nonincreasing and $\Engi(s)/s$ is nondecreasing.
Therefore, $\Bng$ is convex.
In addition, by \eqref{eq:A}, $\Bng$ is  equivalent to $\Engi$.
\end{remark}

Our first principal result then reads as follows.

\begin{theorem}[Reduction principle in Orlicz spaces]
\label{thm:Orlicz_reduction}
Let $n\in\N$ and $0<\gamma<n$ and suppose that $A$ and $B$ are Young functions.
The following assertions are equivalent:
\begin{enumerate}
\item There exists a constant $C_1>0$ such that
\[
	\| M_\gamma f \|_{L^B(\Rn)} \le C_1 \|f\|_{L^A(\Rn)}
\]
for every $f\in L^A(\Rn)$;
\item There exists a constant $C_2>0$ such that
\[
	\int_{\Rn} B \biggl( \frac{M_\gamma f(x)}{C_2 \bigl( \int_{\Rn} A( |f(y)| )\,\d y\bigr)^\gnd } \biggr) \,\d x
		\le \int_{\Rn} A\bigl( |f(x)| \bigr)\,\d x
\]
for every $f\in L^A(\Rn)$;
\item $A$ satisfies \eqref{eq:Acond}
and there exists a constant $C_3>0$ such that
\[
	\int_{0}^{t} \frac{B(s)}{s^{\ngnd+1}}\,\d s
		\le \frac{\Ang(C_3 t)}{t^\ngnd}
		\quad\text{for $t>0$}
\]
where $\Ang$ is the Young function from \eqref{eq:AngDef};
\item $B$ satisfies \eqref{eq:Bconvergeat0}
and there exists a constant $C_4>0$ such that
\[
	\Bng(t) \le A(C_4t)
		\quad\text{for $t>0$}
\]
where $\Bng$ is the Young function from \eqref{eq:BngDef}.
\end{enumerate}
Moreover, the constants $C_1$, $C_2$, $C_3$ and $C_4$ depend on each other and on $n$ and $\gamma$.
\end{theorem}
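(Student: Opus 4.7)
The strategy is to pass from the multidimensional operator inequality~(i) to a one-dimensional inequality via rearrangements, and then to a pointwise condition on Young functions expressed through the constructions $\Ang$ and $\Bng$. The equivalence (i)~$\Leftrightarrow$~(ii) is formal: given~(i), for every $f$ with $\int_{\Rn}A(|f|)\,\d x=1$ one has $\|M_\gamma f\|_{L^B}\le C_1$ and hence $\int_{\Rn}B(M_\gamma f/C_1)\,\d x\le 1$; rescaling $f$ and using~\eqref{kt} then produces~(ii), and the converse is analogous, with $C_1$ and $C_2$ depending on each other through $n$ and $\gamma$.

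The core of the proof is (i)~$\Leftrightarrow$~(iii). I rely on the standard rearrangement estimate
\begin{equation*}
(M_\gamma f)^*(t) \simeq \sup_{s\ge t} s^{\gn-1}\int_0^{s} f^*(r)\,\d r, \qquad t>0,
\end{equation*}
which reduces (i) to a one-dimensional weighted inequality for nonincreasing rearrangements. Testing against $f$ whose rearrangement is of the form $A^{-1}(1/r)\chi_{(0,r)}$ and taking the supremum in $r$ identifies the expression defining $\Gng$; an integration using~\eqref{eq:A} then produces $\Ang$ as in~\eqref{eq:AngDef}, yielding~(iii). Conversely, assuming~(iii), a modular Hardy-type calculation bounds $\int_0^{\infty} B\bigl(C^{-1} t^{\gn-1}\int_0^{t} f^*(r)\,\d r\bigr)\,\d t$ by integration by parts together with~(iii), ending at a multiple of $\int_{\Rn} A(|f|)\,\d x$. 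The condition~\eqref{eq:Acond} is forced along the way: otherwise $M_\gamma\chi_Q$ for a small cube $Q$ already fails to lie in $L^B$ locally.

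The remaining equivalence (iii)~$\Leftrightarrow$~(iv) is purely one-dimensional. By~\eqref{eq:FngDef}, condition~(iii) reads $\Fng(t)\le\Ang(C_3 t)$. Taking generalised inverses and using that $\Ang\simeq\Gngi$ by Remark~\ref{rem:AngYoung}, one obtains $\Fngi(s)\le C\,\Gng(Cs)$, whence $\Eng(t)=t^{\gn}\Fngi(t)\le C\,t^{\gn}\Gng(Ct)$; since $t^{\gn}\Gng(t)\ge A^{-1}(t)$ directly from the definition of $\Gng$, after integrating $\Engi(s)/s$ and using~\eqref{eq:A} one arrives at~(iv). Each step reverses, and~\eqref{eq:Bconvergeat0} is exactly what makes $\Fng$, $\Eng$ and $\Bng$ well-defined; it is implied by~(iii) because its left-hand side is precisely $\Fng(t)/t^{\ngnd}$.

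The main obstacle will be the bookkeeping in the chains $A\leadsto\Gng\leadsto\Ang$ and $B\leadsto\Fng\leadsto\Eng\leadsto\Bng$, which involve several generalised inversions and integrations. At each step one must verify, using~\eqref{eq:A}, \eqref{eq:YoungCompl} and the monotonicity properties recorded in the two remarks preceding the theorem, that the manipulation preserves the equivalence class of Young functions and that the accumulated constants blow up only in a way absorbable into the final statement.
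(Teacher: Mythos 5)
Your outline reproduces the paper's first step (the Cianchi--Kerman--Opic--Pick rearrangement estimate), but the two central implications are not actually established. The serious gap is the necessity direction (i)$\Rightarrow$(iii). Testing the one-dimensional inequality against $g=A^{-1}(1/r)\chi_{(0,r)}$ and taking the supremum over $r$ yields only the \emph{pointwise} domination $B(t)\le \Ang(Ct)$ (equivalently $B^{-1}\gtrsim \Gng$), not the integral condition in (iii), and the two are not equivalent: for $A(t)=t$ one has $\Ang(t)\simeq t^{\ngn}$, so $B(t)=t^{\ngn}$ passes the pointwise test, yet $\int_0^t B(s)s^{-\ngnd-1}\,\d s$ diverges logarithmically and indeed $M_\gamma$ does not map $L^1(\Rn)$ into $L^{\ngnd}(\Rn)$. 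Extracting the full integral condition as a necessary one requires the much finer characterization of Hardy-type inequalities in Orlicz spaces; the paper gets it by dualizing $H_\gn'$ to $H_\gn$ and invoking \citep[Proposition~5.2 and Lemma~5.3]{Cia:18} for (iii) and \citep[Lemma~1]{Cia:96} for (iv). Relatedly, in your sufficiency direction you silently replace $\sup_{s\ge t}s^{\gn}f^{**}(s)$ by $t^{\gn}f^{**}(t)$ before the modular computation; dropping that supremum inside the norm is the Kerman--Pick level-function theorem \citep[Theorem~3.9]{Ker:06} and is not free.

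The implication (i)$\Rightarrow$(ii) also cannot be obtained by ``rescaling $f$'': replacing $f$ by $f/\lambda$ scales $M_\gamma f$ by the same factor and gives no access to the exponent $\gnd$ in the normalization $\bigl(\int_{\Rn}A(|f|)\bigr)^{\gnd}$, while normalizing $\int_{\Rn}A(|f|)=N$ only yields $\int_{\Rn}B\bigl(M_\gamma f/(C_1\max\{N,1\})\bigr)\le 1$, which is not (ii). The paper's argument rescales the \emph{Young functions}, $A_N=A/N$ and $B_N(t)=N^{-1}B(tN^{-\gnd})$, and uses that condition (iii) is invariant under this substitution with an unchanged constant; that invariance is exactly where $\gnd$ comes from, and it is the idea your sketch is missing. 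Finally, your direct passage (iii)$\Leftrightarrow$(iv) is a genuinely different route from the paper's (which proves (iii) and (iv) as two independent characterizations of the same Hardy inequality) and is salvageable, but as written the inequalities point the wrong way: $\Fng(t)\le\Ang(C_3t)$ gives $\Fngi(s)\ge c\,\Gng(cs)$, not $\le$, and it is in the converse direction (iv)$\Rightarrow$(iii) that one must invoke the monotonicity of $\Fngi$ to recover the supremum defining $\Gng$.
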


We would like to point out the philosophy behind the criteria (iii) and (iv)
in our reduction principle. They look completely different at a first glance
and they rely on auxiliary Young functions $A_\ng$ or $B_\ng$ respectively.
In situations when both $A$ and $B$ are given, there is no significantly better
choice of a condition to check. However, imagine that we have one $A$ and
a bunch of candidates $B$ to choose from. Then the condition (iii) comes handy
as we compute $A_\ng$ once and we check the inequality against every choice of
$B$. The condition (iv) is then welcome in the reciprocal case.

It is no surprise that the Young functions $A_\ng$ and $B_\ng$
play the major role in the problem of establishing the corresponding
optimal Orlicz spaces. Let us begin with the targets.

\begin{theorem}[Optimal Orlicz target] \label{thm:OOT}
Let $n\in\N$ and $0<\gamma<n$. Suppose that $A$ is a Young function
satisfying \eqref{eq:Acond} and set $\Ang$ as in \eqref{eq:AngDef}.
If
\begin{equation} \label{eq:Angindex}
	i_{\Ang} > \ngn,
\end{equation}
then
\begin{equation} \label{eq:MgAAng}
	M_\gamma\colon L^A(\Rn) \to L^\Ang(\Rn)
\end{equation}
and $L^\Ang(\Rn)$ is the smallest among all Orlicz spaces in \eqref{eq:MgAAng}.

Conversely, if \eqref{eq:Angindex} fails, then
there is no optimal Orlicz space in \eqref{eq:MgAB} in a sense
that any Orlicz space $L^B(\Rn)$ for which \eqref{eq:MgAB} holds true can be
replaced by a strictly smaller Orlicz space for which \eqref{eq:MgAB} is still valid.

In particular, if $I_A < \ng$, then \eqref{eq:Angindex} is equivalent to $i_A > 1$ and
\begin{equation} \label{eq:supout}
	\Angi(t) \simeq A^{-1}(t)\, t^{-\gn},
		\quad t>0.
\end{equation}

In addition, if \eqref{eq:Acond} is not satisfied, then there does not exist an Orlicz target space $L^B(\Rn)$
for which \eqref{eq:MgAB} holds.
\end{theorem}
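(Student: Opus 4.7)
My plan is to reduce every assertion to criterion (iii) of Theorem~\ref{thm:Orlicz_reduction}, exploiting that $\ngn=\ngnd=n/(n-\gamma)$, so the Boyd-index condition \eqref{eq:Angindex} matches precisely the exponent appearing in the integral bound. To establish boundedness and optimality under \eqref{eq:Angindex}, I apply (iii) with $B=\Ang$. Since $i_\Ang>\ngn$, there is $q\in(\ngn,i_\Ang)$ for which $\Ang(s)/s^q$ is essentially nondecreasing, yielding $\Ang(s)\le C(s/t)^q\Ang(t)$ for $s\le t$; integrating gives
\[
 \int_0^t \frac{\Ang(s)}{s^{\ngnd+1}}\,\d s
   \le C\,\frac{\Ang(t)}{t^q}\int_0^t s^{q-\ngnd-1}\,\d s
   = \frac{C}{q-\ngnd}\cdot\frac{\Ang(t)}{t^\ngnd},
\]
which is (iii) with $B=\Ang$; hence \eqref{eq:MgAAng} holds. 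For optimality, any $B$ with $M_\gamma\colon L^A(\Rn)\to L^B(\Rn)$ must obey $\int_0^t B(s)/s^{\ngnd+1}\,\d s\le \Ang(C_3 t)/t^\ngnd$; bounding the integral over $(t/2,t)$ from below by $c\,B(t/2)/t^\ngnd$ via the monotonicity of $B$ yields $B(t/2)\le C\,\Ang(C_3 t)$, so $\Ang$ dominates $B$ and $L^\Ang(\Rn)\hookrightarrow L^B(\Rn)$.

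For the converse direction, if $i_\Ang\le\ngn$ then $\Ang$ itself fails the preceding self-inequality because the ratio of its two sides is unbounded. Every admissible Young function $B$ is therefore essentially strictly smaller than $\Ang$, and I would show that the class of admissible $B$'s has no maximum: given any such $B$, one constructs $\widehat B\esucc B$ still obeying (iii) by enlarging $B$ on a suitable sequence of intervals where the bound has slack (the slack being unbounded at infinity precisely because $i_\Ang\le\ngn$). The main technical obstacle is to execute this enlargement so that $\widehat B$ remains a Young function while the integral inequality is preserved with at most a marginally larger constant; this is the subtle step where the failure of \eqref{eq:Angindex} is exploited quantitatively.

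Under the additional hypothesis $I_A<\ng$, both \eqref{eq:supout} and the equivalence $i_\Ang>\ngn\iff i_A>1$ follow from a direct computation. Picking $p\in(I_A,\ng)$, the standard Boyd-index estimate gives $A^{-1}(s)\le C(s/t)^{1/p}A^{-1}(t)$ for $s\le t$, whence $A^{-1}(s)\,s^{-\gn}\le C(s/t)^{1/p-\gn}A^{-1}(t)\,t^{-\gn}\le C\,A^{-1}(t)\,t^{-\gn}$ because $1/p-\gn>0$. Combined with the trivial lower bound $\Gng(t)\ge A^{-1}(t)\,t^{-\gn}$ obtained by evaluating the defining supremum at $s=t$, this yields $\Gng(t)\simeq A^{-1}(t)\,t^{-\gn}$, and \eqref{eq:supout} then follows from Remark~\ref{rem:AngYoung}. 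Substituting into $h_\Ang(t)=\sup_s \Angi(st)/\Angi(s)$ gives $h_\Ang(t)\simeq t^{-\gn}h_A(t)$, so $1/i_\Ang=1/i_A-\gn$ from \eqref{BIdef}, and an elementary rearrangement shows $i_\Ang>\ngn=1/(1-\gn)$ is equivalent to $i_A>1$. Finally, the last assertion is immediate: criterion (iii) of Theorem~\ref{thm:Orlicz_reduction} contains \eqref{eq:Acond} as a standing hypothesis, so if \eqref{eq:Acond} fails then (iii), and by the equivalence also (i), fails for every choice of $B$.
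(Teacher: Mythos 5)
Your positive direction, the optimality argument (lower-bounding the integral over $(t/2,t)$), the ``in particular'' computation via the Boyd-index estimate $h_\Ang(t)\simeq t^{-\gn}h_A(t)$, and the final assertion are all sound; they replace the paper's citations to auxiliary results by direct computations, which is fine.

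The converse direction, however, contains a genuine gap rather than a proof. You write that one ``constructs $\widehat B\esucc B$ still obeying (iii) by enlarging $B$ on a suitable sequence of intervals where the bound has slack'' and then explicitly defer the execution as ``the main technical obstacle.'' That obstacle \emph{is} the theorem: the entire content of the converse is the construction of an essentially larger admissible Young function, and the paper devotes two nontrivial lemmas (Lemma~\ref{lemm:construction} and Lemma~\ref{lemm:construction_bounded}) plus two external results to it. Your sketch also misidentifies where the difficulty lives: you say the slack is ``unbounded at infinity,'' but the failure of \eqref{eq:Angindex} splits into a near-infinity case and a near-zero case (conditions \eqref{feb6} and \eqref{feb7} in the paper), and each of these further splits according to whether $\Ang(t)/t^{\ngnd}$ is bounded (resp.\ constant) or tends to $\infty$ (resp.\ $0$); the near-zero, nonconstant case requires the delicate piecewise-linear interpolation of Lemma~\ref{lemm:construction}, in which one must choose the nodes $t_k,\tau_k$ so that $\widehat B$ is still convex, essentially dominates $B$, \emph{and} the added mass $\sum_k D(t_k)/t_k^{\ngnd}$ is controlled by a geometric series so that the integral inequality survives with constant $5C$. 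Separately, your assertion that ``every admissible $B$ is essentially strictly smaller than $\Ang$'' does not by itself rule out an optimal target: $\Ang$ failing to be admissible is compatible, a priori, with the admissible class having a maximal element, so the explicit enlargement of an arbitrary admissible $B$ cannot be bypassed.
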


\def\pcon#1{{\frac{np_{#1}}{n-\gamma p_{#1}}}}
\def\acon#1{{\frac{n\alpha_{#1}}{n-\gamma p_{#1}}}}
\begin{example} \label{ex:LZrange}
Let $1\le p_0, p_\infty\le \infty$ and $\alpha_0, \alpha_\infty\in\R$.
If $p_0=1$ then let $\alpha_0\le 0$ and if $p_\infty=1$ then let $\alpha_\infty \ge 0$.
Suppose that
\[
	A(t)
	\text{ is equivalent to }
	\begin{cases}
		t^{p_0}\,\ell(t)^{\alpha_0}
			&\text{ near zero},
			\\
		t^{p_\infty}\,\ell(t)^{\alpha_\infty}
			&\text{ near infinity}.
	\end{cases}
\]
Let us consider the nontrivial cases only, i.e.\ let us assume
that \eqref{eq:Acond} is satisfied. This implies that either $1\le p_0 <\ng$
or $p_0=\ng$ and $\alpha_0 \ge 0$.
Computations show that
\[
	A_\ng(t)
	\text{ is equivalent to }
	\begin{cases}
		\vtop{\vss\hbox{$t^\pcon0 \ell (t)^{\acon0},$}\vss}
			& \vtop{\lineskip=\jot%
				\hbox{$1\le p_0 < \ng$, $\alpha_0 \in \R$ or}
				\hbox{$p_0=1$, $\alpha_0 \le 0$,}}
			\\
		\exp(-t^{-\frac{n}{\gamma\alpha_0}}),
			& \text{$p_0 = \ng$, $\alpha_0 > 0$,}
			\\
		0,
			& \text{$p_0 = \ng$, $\alpha_0 = 0$,}
	\end{cases}
\]
near zero and
\[
	A_\ng(t)
	\text{ is equivalent to }
	\begin{cases}
		\vtop{\vss\hbox{$t^\pcon\infty \ell (t)^{\acon\infty}$,}\vss}
			& \vtop{\lineskip=\jot%
				\hbox{$1\le p_\infty < \ng$, $\alpha_\infty \in \R$ or}
				\hbox{$p_\infty=1$, $\alpha_\infty\ge 0$,}}
			\\
		\exp(t^{-\frac{n}{\gamma\alpha_\infty}}),
			& \text{if $p_\infty = \ng$, $\alpha_\infty < 0$,}
			\\
		\vtop{\vss\hbox{$\infty,$}\vss}
			&
			\vtop{\lineskip=\jot%
			\hbox{$p_\infty = \ng$, $\alpha_\infty \ge 0$ or}
			\hbox{$p_\infty > \ng$, $\alpha_\infty\in\R$,}}
	\end{cases}
\]
near infinity.
Moreover,
\[
	i_{A_\ng} = 
	\begin{cases}
		\min\left\{\pcon0,\pcon\infty  \right\},
			& 1\le \min\{p_0,p_\infty\} < \ng,
			\\
		\infty,
			& p_0=p_\infty=\ng,
	\end{cases}
\]
whence $i_{A_\ng} > \ngn$ if and only if both $p_0>1$ and $p_\infty>1$.
Therefore, by Theorem~\ref{thm:OOT},
\[
	M_\gamma\colon L^A(\Rn) \to L^{A_\ng}(\Rn)
\]
and the range spaces are optimal among all Orlicz spaces.

If $p_0=1$ or $p_\infty=1$, then every Young function $B$ satisfying \eqref{eq:MgAB}
can be essentially enlarged near zero or near infinity, respectively.
\end{example}

\begin{example}%\label{ex:elog}
Let $1\le p_0, p_\infty\le\infty$.
% and $\alpha_0,\alpha_\infty \in(0,1)$.
Suppose that
\[
	A(t)
	\text{ is equivalent to }
	\begin{cases}
		%t^{p_0}\,e^{-(\log 1/t)^{\alpha_0}}
		t^{p_0}\,e^{-\sqrt{\log 1/t}}
			&\text{ near zero},
			\\
		%t^{p_\infty}\,e^{(\log t)^{\alpha_\infty}}
		t^{p_\infty}\,e^{\sqrt{\log t}}
			&\text{ near infinity}.
	\end{cases}
\]
In order to ensure \eqref{eq:Acond}, assume that $1\le p_0 < \ng$.
We have that $A(t)t^{-\ng}$ is decreasing near zero hence
$A^{-1}(s)s^{-\gn}$ is increasing near zero. Thus the supremum
in the definition of $\Ang$ may be disregarded and $\Ang$ obeys
\eqref{eq:supout} near zero. Calculation shows that
\[
	%A^{-1}(s) \simeq s^{1/p_0} e^{-p_0^{-\alpha_0-1}(\log 1/s)^{\alpha_0}}
	A^{-1}(s) \simeq s^\frac{1}{p_0} e^{-p_0^{-\frac32}\sqrt{\log 1/s}}
	\quad\text{near zero}
\]
and then
\[
	%\Ang(t) = t^{\pcon0} e^{-(\frac{n}{n-\gamma p_0})^{\alpha_0+1}(\log 1/t)^{\alpha_0}}
	\Ang(t)
		\quad\text{is equivalent to}\quad
		t^{\pcon0} e^{-(\frac{n}{n-\gamma p_0})^{\frac32}\sqrt{\log 1/t}}
	\quad\text{near zero}
\]
If $p_\infty\ge\ng$, then $A(t)t^{-\ng}$ is increasing,
$A^{-1}(s)s^{-\gn}$ is decreasing, hence $\Gng$ is a constant function
and $\Ang=\infty$ near infinity.
If $1\le p_\infty < \ng$, then, similarly as before, $\Ang$ satisfies
\eqref{eq:supout} near infinity. To sum it up,
%\[
%	%A^{-1}(s) \simeq s^{1/p_\infty} e^{-p_\infty^{-\alpha_\infty-1}(\log s)^{\alpha_\infty}}
%	A^{-1}(s) \simeq s^\frac{1}{p_\infty} e^{-p_\infty^{-\frac32}\sqrt{\log s}}
%	\quad\text{near infinity}
%\]
%whence
\[
	%\Ang(t) = t^{\pcon\infty} e^{(\frac{n}{n-\gamma p_\infty})^{\alpha_\infty+1}(\log t)^{\alpha_\infty}}
	\Ang(t)
		\text{ is equivalent to }
		\begin{cases}
			t^{\pcon\infty} e^{(\frac{n}{n-\gamma p_\infty})^{\frac32}\sqrt{\log t}},
				& 1\le p_\infty < \ng,
				\\
			\infty,
				& p_\infty \ge \ng,
		\end{cases}
\]
near infinity. In conclusion
\[
	i_{A_\ng} = 
	\begin{cases}
		\min\left\{\pcon0,\pcon\infty  \right\},
			& 1\le p_\infty < \ng,
			\\
		\pcon0,
			& p_\infty\ge\ng,
	\end{cases}
\]
and $i_\Ang > \ngn$ if and only if both $p_0>1$ and $p_\infty>1$,
otherwise any Young function $B$ satisfying \eqref{eq:MgAB} can be essentially
enlarged near zero or near infinity, respectively.
\end{example}

\begin{theorem}[Optimal Orlicz domain] \label{thm:OOD}
Let $n\in\N$ and $0<\gamma<n$
and suppose that $B$ is a Young function. If $B$ satisfies \eqref{eq:Bconvergeat0},
then $B_\ng$ is a Young function for which
\begin{equation} \label{eq:MgBngB}
	M_\gamma\colon L^\Bng(\Rn) \to L^B(\Rn)
\end{equation}
and $L^\Bng(\Rn)$ is the largest possible Orlicz space satisfying
\eqref{eq:MgBngB}.

In addition, if $i_B > \ngn$, then $B$ satisfies \eqref{eq:Bconvergeat0}
and $B_\ng$ obeys the simpler relation
\begin{equation} \label{eq:intout}
	B_\ng^{-1}(t) \simeq t^\gn\,B^{-1}(t),
		\quad t>0.
\end{equation}

Conversely, if \eqref{eq:Bconvergeat0} fails, then
there is no Orlicz space $L^A(\Rn)$ for which \eqref{eq:MgAB} holds.
\end{theorem}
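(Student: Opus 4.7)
The plan is to deduce the entire statement from the reduction principle of Theorem~\ref{thm:Orlicz_reduction}, in particular from its equivalent condition (iv). Under assumption \eqref{eq:Bconvergeat0}, the remark following \eqref{eq:FngDef} already establishes that $\Bng$ is a well-defined Young function, so the boundedness \eqref{eq:MgBngB} follows by plugging $A=\Bng$ into Theorem~\ref{thm:Orlicz_reduction}(iv): the required inequality $\Bng(t)\le\Bng(C_4 t)$ is trivially satisfied with $C_4=1$. For optimality, let $L^A(\Rn)$ be any Orlicz space rendering \eqref{eq:MgAB} true; Theorem~\ref{thm:Orlicz_reduction}(iv) yields $\Bng(t)\le A(C_4 t)$ for some $C_4>0$, which is precisely the statement that $A$ dominates $\Bng$, and by the inclusion characterization recalled in Section~\ref{subsec:Orlicz} this gives $L^A(\Rn)\to L^\Bng(\Rn)$. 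The nonexistence of any Orlicz domain when \eqref{eq:Bconvergeat0} fails is the contrapositive of the necessity part of Theorem~\ref{thm:Orlicz_reduction}(iv).

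It remains to establish the simpler representation \eqref{eq:intout} under the hypothesis $i_B>\ngn$. The strategy is to show that $\Fng(t)\simeq B(t)$ up to multiplicative constants, after which, using $\Bng\simeq\Engi$ (from the remark after \eqref{eq:FngDef}) together with $\Eng(t)=t^\gn \Fngi(t)$, the claim $\Bngi(t)\simeq t^\gn B^{-1}(t)$ falls out. Fix $q$ with $\ngn<q<i_B$. A standard consequence of the Boyd-index formula \eqref{BIdef} is the quasi-monotonicity $B(s)/s^q\le C\,B(t)/t^q$ for $0<s\le t$ with a uniform constant $C$. Plugging this bound into the defining integral of $\Fng$ in \eqref{eq:FngDef} and integrating $s^{q-\ngnd-1}$ over $(0,t)$, which is integrable at zero precisely because $q>\ngnd$, yields at once both the convergence of \eqref{eq:Bconvergeat0} and the upper bound $\Fng(t)\lesssim B(t)$. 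For the reverse direction, restricting the integral to $[t/2,t]$ gives $\Fng(t)\gtrsim B(t/2)$. Converting both inequalities to comparisons of inverses and invoking the concavity estimates $B^{-1}(\lambda u)\le\lambda B^{-1}(u)$ for $\lambda\ge 1$ and $B^{-1}(\lambda u)\ge\lambda B^{-1}(u)$ for $0\le\lambda\le 1$ then produces the desired pointwise equivalence $\Fngi(t)\simeq B^{-1}(t)$.

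The main technical point that will require care is the passage from the Boyd-index asymptotics in \eqref{BIdef} (which is a priori only a statement about limits) to a global quasi-monotonicity estimate for $B(s)/s^q$ with a constant uniform over all $0<s\le t$; this will need a combination of the submultiplicativity of $h_B$ and a separate argument to absorb the behaviour at small arguments. Once this is secured, the remaining ingredients are routine integral estimates and bookkeeping of multiplicative constants under passage to generalized inverses, and the three claimed conclusions---boundedness of $M_\gamma$, optimality of $L^\Bng(\Rn)$, and the closed-form formula \eqref{eq:intout} under the Boyd hypothesis---all fall out of Theorem~\ref{thm:Orlicz_reduction}(iv) and the above calculation.
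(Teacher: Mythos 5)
Your proposal is correct, and for the three main claims (boundedness, optimality of the domain, and the converse when \eqref{eq:Bconvergeat0} fails) it is exactly the paper's argument: everything is read off from criterion (iv) of Theorem~\ref{thm:Orlicz_reduction}, with the observation that $A=\Bng$ satisfies (iv) trivially with $C_4=1$ and that any admissible $A$ must dominate $\Bng$.

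The only place where you diverge is the proof of \eqref{eq:intout}. The paper routes this through Lemma~\ref{lemm:inheritedind}, which in turn cites \citep[Proposition~4.1]{Cia:18} for the fact that $i_B>\ngn$ implies $\Fng\simeq B$ (and hence \eqref{eq:Bconvergeat0}). You instead reprove this fact directly: pick $\ngn<q<i_B$, upgrade the limit in \eqref{BIdef} to the global quasi-monotonicity $B(s)s^{-q}\lesssim B(t)t^{-q}$ for $s\le t$ via submultiplicativity and local boundedness of $h_B$, and then estimate the integral in \eqref{eq:FngDef} from above by $\int_0^t s^{q-\ngnd-1}\,\d s$ (convergent since $q>\ngnd$) and from below by restricting to $[t/2,t]$. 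This is a sound and self-contained substitute for the external citation; what it costs is the technical step you correctly flag (passing from the Boyd-index limit to a uniform two-sided dilation estimate), which the paper avoids by importing the result. Both routes deliver $\Fng\simeq B$, after which $\Bngi\simeq\Eng(t)=t^\gn\Fngi(t)\simeq t^\gn B^{-1}(t)$ follows identically in either treatment.
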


\def\qcon#1{{\frac{nq_{#1}}{n+\gamma q_{#1}}}}
\def\acon#1{{\frac{n\alpha_{#1}}{n+\gamma q_{#1}}}}
\begin{example} \label{ex:LZdomain}
Let $1 < q_0,q_\infty\le \infty$ and $\alpha_0,\alpha_\infty\in\R$. Suppose that
$B$ is a Young function such that
\[
	B(t)
	\text{ is equivalent to }
	\begin{cases}
		t^{q_0}\,\ell(t)^{\alpha_0}
			&\text{ near zero},
			\\
		t^{q_\infty}\,\ell(t)^{\alpha_\infty}
			&\text{ near infinity}.
	\end{cases}
\]
The condition \eqref{eq:Bconvergeat0} requires that
either $q_0 > \ngnd$ or $q_0=\ngnd$ and $\alpha_0 < -1$.
One can compute that
\[
	B_\ng(t)
	\text{ is equivalent to }
	\begin{cases}
		t^\qcon0\,\ell(t)^\acon0,
			& q_0 > \ngn,
			\\
		t\,\ell(t)^{(1-\gn)(\alpha_0+1)},
			& q_0 = \ngn,\, \alpha_0 < -1,
	\end{cases}
\]
near zero and
\[
	B_\ng(t)
	\text{ is equivalent to }
	\begin{cases}
		t^\qcon\infty\,\ell(t)^\acon\infty,
			& q_\infty > \ngn,
			\\
		t\,\ell(t)^{(1-\gn)(\alpha_\infty+1)},
			& q_\infty = \ngn,\, \alpha_\infty > -1,
			\\
		t\,\ell(\ell(t))^{(1-\gn)},
			& q_\infty = \ngn,\, \alpha_\infty = -1,
			\\
		\vtop{\vss\hbox{$t,$}\vss}
			&
			\vtop{\lineskip=\jot%
			\hbox{$q_\infty = \ngn$, $\alpha_\infty < -1$ or}
			\hbox{$q_\infty < \ng$,}}
	\end{cases}
\]
near infinity. By Theorem~\ref{thm:OOD},
\[
	M_\gamma\colon L^{B_\ng}(\Rn) \to L^B(\Rn)
\]
and $L^{B_\ng}(\Rn)$ is the optimal Orlicz space.
\end{example}

\begin{example}
Let $1 \le q_0,q_\infty\le \infty$. We deal with the Young
function $B$ such that
\[
	B(t)
	\text{ is equivalent to }
	\begin{cases}
		t^{q_0}\,e^{-\sqrt{\log 1/t}}
			&\text{ near zero},
			\\
		t^{q_\infty}\,e^{\sqrt{\log t}}
			&\text{ near infinity}.
	\end{cases}
\]
The condition \eqref{eq:Bconvergeat0} forces
that $q_0\ge \ngnd$.
If $q_0 > \ngnd$ then $i_B > \ngnd$
and $\Bng$ satisfies the simplified relation \eqref{eq:intout} near
zero.
%we have that
%\[
%	\Bng(t)
%		\quad\text{is equivalent to}\quad
%		t^{\qcon0} e^{-(\frac{n}{n+\gamma q_0})^{\frac32}\sqrt{\log 1/t}}
%	\quad\text{near zero}.
%\]
In the case when $q_0=\ngnd$ then
\[
	\Fng(t) \simeq t^\ngn e^{-\sqrt{\log 1/t}}\sqrt{\log 1/t}
	\quad\text{near zero}
\]
and
\[
	\Bngi(s) \simeq \Fngi(s) s^\gn
		\simeq se^{-(1-\gn)^\frac32\sqrt{\log 1/s}} (\log 1/s)^{-\frac{n-\gamma}{2n}}
	\quad\text{near zero}.
\]
%whence
%\[
%	\Bng(t)
%	\quad\text{is equivalent to}\quad
%		t e^{-(1-\ng)^\frac32 \sqrt{\log 1/t}} (\log 1/t)^\frac{2n}{n-\gamma}
%	\quad\text{near zero}
%\]
Calculating the inverses, we get that
\[
	\Bng(t)
	\text{ is equivalent to }
	\begin{cases}
		t^{\qcon0} e^{-(\frac{n}{n+\gamma q_0})^{\frac32}\sqrt{\log 1/t}},
		& q_0 > \ngn,
			\\
		t e^{-(1-\ng)^\frac32 \sqrt{\log 1/t}} (\log 1/t)^\frac{2n}{n-\gamma},
		& q_0 = \ngn,
	\end{cases}
\]
near zero.

Let us also sketch the calculations near infinity. If $g_\infty< \ngnd$,
then
\[
	\int^{\infty} \frac{B(s)}{s^{\ngnd+1}}\,\d s <\infty
\]
whence $\Fng(t) \simeq t^\ngn$ and $\Bng(t)$ is equivalent to $t$ near
infinity. If $q_\infty >\ngnd$ then $\Bng$ obeys the relation \eqref{eq:intout}
near infinity, and finally, when $q_\infty=\ngnd$, then
\[
	\Fng(t) \simeq t^\ngn e^{\sqrt{\log t}}\sqrt{\log t}
	\quad\text{near zero}.
\]
In conclusion, we have
\[
	\Bng(t)
	\text{ is equivalent to }
	\begin{cases}
		t^{\qcon\infty} e^{(\frac{n}{n+\gamma q_\infty})^{\frac32}\sqrt{\log t}},
		& q_\infty > \ngn,
			\\
		t e^{(1-\ng)^\frac32 \sqrt{\log t}} (\log t)^\frac{2n}{n-\gamma},
		& q_\infty = \ngn,
			\\
		t,
		& q_\infty < \ngn,
	\end{cases}
\]
near infinity and by Theorem~\ref{thm:OOD},
\[
	M_\gamma\colon L^{B_\ng}(\Rn) \to L^B(\Rn),
\]
in which $L^{B_\ng}(\Rn)$ is optimal within Orlicz spaces.
\end{example}

Concerning optimality, one may naturally ask the question
if the relation ``be optimal Orlicz space for someone''
is symmetric. We will look closely what is meant by this now.

Let us start on the target side, so let us have some Young
function $B$ fixed. By Theorem~\ref{thm:OOD}, the optimal Orlicz domain
always exists and is described by the Young function $B_\ng$.
At this stage we may set $A=B_\ng$ and try to use Theorem~\ref{thm:OOT}
to investigate the corresponding best possible Orlicz target.

We illustrate what is happening on a basic example.
Assume that $B(t) = t^q$ near infinity
and $q>\ngn$
(we will be ignoring the behaviour near zero for the sake of this paragraph,
the careful reader may adapt the Young functions also near zero).
Then $B_\ng(t)$ is equivalent to $t^{\frac{nq}{n+\gamma q}}$ near infinity.
Set $A=B_\ng$ and observe that $A_\ng(t)$ is equivalent
to $t^q$, whence $i_{A_\ng}=q>\ngn$. Thus $A_\ng$ is equivalent to
$B$ and both domain $L^A$ and range $L^B$ are optimal Orlicz spaces.

Now, if $B(t)=t^\ngn$ near infinity, then
$B_\ng$ is equivalent to $t\log(t)^{(1-\ng)}$ near infinity.
Denoting $A=B_\ng$, $A_\ng(t)$ is equivalent to $t^\ngnd\log(t)$
which exceeds~$B(t)$. However, $i_{A_\ng}=\ngn$ thus $B$ is not
the optimal Orlicz space to $L^A$ and moreover, no such Orlicz
space exists.

From this example we may guess that the borderline lays somewhere
around the space $L^\ngn$. Indeed, the proper classification
of this phenomenon relies on the Boyd index of $B$ as the following
theorem shows.

\begin{theorem}[Orlicz range reiteration]\label{thm:ORR}
Let $B$ be a Young function. Suppose that $B$ obeys
\begin{equation} \label{eq:Bindex}
	i_B > \ngn.
\end{equation}
Then the Young function $B_\ng$ from \eqref{eq:BngDef}
satisfies \eqref{eq:intout}
and
\begin{equation} \label{eq:MgBngBopt}
	M_\gamma\colon L^{B_\ng}(\Rn) \to L^B(\Rn)
\end{equation}
where both domain and target spaces are optimal among all Orlicz spaces.

Conversely, if \eqref{eq:Bindex} fails, then
$L^{B_\ng}(\Rn)$ is optimal Orlicz domain and no optimal Orlicz range exists
in \eqref{eq:MgBngBopt}.
\end{theorem}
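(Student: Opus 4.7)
The plan is to chain the already established Theorems~\ref{thm:OOD} and \ref{thm:OOT}. Under the hypothesis $i_B>\ngn$, Theorem~\ref{thm:OOD} immediately gives that \eqref{eq:Bconvergeat0} holds, that the simplified relation \eqref{eq:intout} is valid, and that $M_\gamma\colon L^\Bng(\Rn)\to L^B(\Rn)$ with $L^\Bng(\Rn)$ being the largest (optimal) Orlicz domain. To obtain the optimality of the target, I set $A=\Bng$ and compute $\Ang$, the goal being $\Ang\simeq B$: Theorem~\ref{thm:OOT} applied to $A$ would then deliver $L^B$ as the optimal Orlicz target.

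The computation is short. Rewriting \eqref{eq:intout} as $A^{-1}(s)\,s^{-\gn}\simeq B^{-1}(s)$ and using monotonicity of $B^{-1}$, the supremum in the definition of $\Gng$ is, up to multiplicative constants, attained at the endpoint, so $\Gng(t)\simeq B^{-1}(t)$. By Remark~\ref{rem:AngYoung}, $\Ang\simeq\Gngi\simeq B$; since Boyd indices are preserved by equivalence, $i_\Ang=i_B>\ngn$, and Theorem~\ref{thm:OOT} applied to $A=\Bng$ yields \eqref{eq:MgBngBopt} together with the claimed optimality of $L^B$.

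For the converse, assume $i_B\le\ngn$ while \eqref{eq:Bconvergeat0} still holds (without it there is no Orlicz domain whatsoever by Theorem~\ref{thm:OOD}, and the statement is vacuous). Setting $A=\Bng$ once more, I no longer have \eqref{eq:intout}, but the general construction still yields $A^{-1}(s)\,s^{-\gn}\simeq\Fngi(s)$, and since $\Fng(t)/t^\ngnd$ is nondecreasing by construction, $\Fngi$ is nondecreasing as well, so the same collapsing argument gives $\Ang\simeq\Fng$. By the negative part of Theorem~\ref{thm:OOT}, the nonexistence of an optimal Orlicz range for $L^A=L^\Bng$ will follow once I show $i_\Ang=i_\Fng\le\ngn$.

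The main obstacle is therefore the equivalence $i_\Fng>\ngn\iff i_B>\ngn$. Its ``if'' half is a standard Hardy-type argument: $B(t)/t^q$ essentially nondecreasing for some $q>\ngn$ gives $\int_0^t B(s)\,s^{-\ngnd-1}\,\d s\lesssim B(t)/t^\ngnd$, hence $\Fng(t)\lesssim B(t)$, and combined with the elementary pointwise lower bound $\Fng(t)\gtrsim B(t/2)$ plus doubling of $B$ one obtains $\Fng\simeq B$. For the ``only if'' half, differentiation of $\Fng(t)=t^\ngnd\int_0^tB(s)\,s^{-\ngnd-1}\,\d s$ yields the identity $B(t)=t\Fng'(t)-\ngn\Fng(t)$; if $\Fng(t)/t^q$ is essentially nondecreasing with $q>\ngn$, then $t\Fng'(t)\ge q\Fng(t)$, so $B(t)\ge(q-\ngn)\Fng(t)$, and the lower bound $\Fng(t)\gtrsim B(t/2)$ propagates this into $B(t)/t^q$ being essentially nondecreasing (after a doubling argument on the ratio $t/s\ge 2$), giving $i_B\ge q>\ngn$. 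The contrapositive then closes the converse part of the theorem.
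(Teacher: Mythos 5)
Your architecture matches the paper's: boundedness and domain optimality from Theorem~\ref{thm:OOD}, then set $A=\Bng$, show $\Ang$ is equivalent to $\Fng$ (the supremum in $\Gng$ collapsing because $\Bng^{-1}(s)s^{-\gn}\simeq\Fngi(s)$ is nondecreasing), and feed the result into Theorem~\ref{thm:OOT}. The ``main obstacle'' you isolate, namely $i_{\Fng}>\ngn\iff i_B>\ngn$ (with $\Fng\simeq B$ in the affirmative case), is precisely the paper's Lemma~\ref{lemm:inheritedind}, so the overall route is the same; only your proof of that equivalence differs. Your ``if'' half is the same Hardy-type computation the paper gets from \citep[Proposition~4.1]{Cia:18}. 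In the ``only if'' half there is a soft spot: from $i_{\Fng}>\ngn$ you only know $\Fng(t)/t^q$ is \emph{essentially} nondecreasing (equivalently, $\Fng(\sigma t)\ge c\sigma^{\ngn}\Fng(t)$ for some $\sigma,c>1$), and that does not yield the pointwise differential inequality $t\Fng'(t)\ge q\Fng(t)$ on which your identity $B(t)=t\Fng'(t)-\ngn\Fng(t)$ then relies. The paper avoids differentiation altogether: writing $c=1+\varepsilon$, the multiplicative condition gives $\int_0^t B(s)s^{-\ngnd-1}\,\d s\le\varepsilon^{-1}\int_t^{\sigma t}B(s)s^{-\ngnd-1}\,\d s\le B(kt)\,t^{-\ngnd}$, which is the integral characterization of $i_B>\ngn$ directly. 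Your argument is repairable along exactly these lines, so I would call this a fixable lapse in rigor rather than a wrong approach; everything else, including the treatment of the converse case and the remark that \eqref{eq:Bconvergeat0} must be assumed for $\Bng$ to exist, is in order.
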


In this iteration scheme, we may also assume that $A$ is given and we try
to make one step further and then one step back, or more precisely,
we can compute $A_\ng$, then set $B=A_\ng$ and then analyze the
relation of $B_\ng$ and $A$. The main difference between this case
and the previous one is that even the success after first step is not
guaranteed any more. So one has to restrict his attention to the positive
cases only. Let us look at similar trivial example.

Assume that $A(t)=t^{p_0}$ near zero and $A(t)=t^{p_\infty}$ near infinity,
where $1<p_0<\ng<p_\infty<\infty$. Calculations gives that $A_\ng(t)$
is equivalent to $t^{\frac{np_0}{n-\gamma p_0}}$ and to $\infty$ near infinity.
Since $i_{A_\ng} > \ngn$, $M_\gamma$ acts from $L^A$ to $L^{A_\ng}$ and the range is
optimal within Orlicz spaces. If we set $B=A_\ng$, then $B_\ng(t)$ is equivalent
to $t^{p_0}$ near zero and to $t^\ng$ near infinity. We see that
$B_\ng$ coincides with $A$ near zero while there is a significant improvement
near infinity.

To state the result in its full generality, we need to introduce
a way how to define the improved Young function to $A$.

Let $A$ be a Young function satisfying \eqref{eq:Acond}
and let $\Asup$ be given by
\begin{equation} \label{eq:Asupdef}
	\Asup(t) = \int_{0}^{t} \frac{\Gsupi(s)}{s}\,\d s, 
		\quad t > 0.
\end{equation}
where $\Gsup$ is defined by
\[
	\Gsup(t) = t^\ng \sup_{0<s\le t} A^{-1}(s)\, t^{-\ng},
		\quad t > 0.
\]
Using similar arguments as in Remark~\ref{rem:AngYoung},
one can easily observe that $\Gsup$ is increasing and
$\Asup$ is well-defined Young function equivalent to
$\Gsupi$.

The spirit of this improvement lays in the observation
that any domain $A$ can be
always replaced by $\Asup$. This is the essence of the next theorem.

\begin{theorem}[Orlicz domain reiteration] \label{thm:ODR}
Let $A$ and $B$ be a Young functions.
Suppose that $A$ satisfies \eqref{eq:Acond}
and let $\Asup$ be the Young function from \eqref{eq:Asupdef}.
Then \eqref{eq:MgAB} holds
if and only if
\begin{equation} \label{eq:MgAsupB}
	M_\gamma\colon L^\Asup(\Rn) \to L^B(\Rn).
\end{equation}

Moreover, if
\begin{equation} \label{eq:Asupindex}
	i_\Asup > 1,
\end{equation}
then
\[
	M_\gamma\colon L^\Asup(\Rn) \to L^{A_\ng}(\Rn)
\]
and both domain and target spaces are optimal in the class of Orlicz spaces.

Conversely, if \eqref{eq:Asupindex} fails, then
$L^A(\Rn)$ can be replaced by $L^\Asup(\Rn)$
and no optimal Orlicz target exists.
\end{theorem}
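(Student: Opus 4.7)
The plan is to derive Theorem~\ref{thm:ODR} by combining the reduction principle of Theorem~\ref{thm:Orlicz_reduction} with the optimality statements of Theorems~\ref{thm:OOT} and~\ref{thm:OOD}. The linchpin of the argument is the pair of identifications $(\Asup)_\ng \simeq \Ang$ and (under \eqref{eq:Asupindex}) $(A_\ng)_\ng \simeq \Asup$, together with the structural bound $I_\Asup \le \ng$ that is built into the factor $t^\gn$ appearing in the definition of $\Gsup$. All three assertions are proved by direct manipulation of the suprema in the definitions of $\Gng$ and $\Gsup$, using the idempotence of the operation $t\mapsto\sup_{0<s\le t}$ together with the simpler relation \eqref{eq:intout}, which becomes available once the index condition $i_{A_\ng}>\ngn$ is in place.

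With the identity $(\Asup)_\ng\simeq\Ang$ in hand, the equivalence of \eqref{eq:MgAB} and \eqref{eq:MgAsupB} follows at once from Theorem~\ref{thm:Orlicz_reduction}(iii), since both statements reduce to the same inequality
\[
	\int_0^t \frac{B(s)}{s^{\ngnd+1}}\,\d s \le \frac{\Ang(Ct)}{t^\ngnd},
\]
whose right-hand side depends on $A$ only through $\Ang$. The condition \eqref{eq:Acond} transfers from $A$ to $\Asup$ as a straightforward consequence of the supremum construction.

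For the ``moreover'' part, I would apply Theorem~\ref{thm:OOT} to $\Asup$ in place of $A$. Because $I_\Asup\le\ng$, the final clause of Theorem~\ref{thm:OOT} makes the index condition \eqref{eq:Angindex} equivalent to $i_\Asup>1$, i.e.\ to \eqref{eq:Asupindex}. When this holds, Theorem~\ref{thm:OOT} yields $M_\gamma\colon L^\Asup(\Rn)\to L^{(\Asup)_\ng}(\Rn)=L^{A_\ng}(\Rn)$ with optimal Orlicz target. The optimality of $L^\Asup(\Rn)$ as an Orlicz domain is then obtained from Theorem~\ref{thm:OOD} applied with $B=A_\ng$: the convergence \eqref{eq:Bconvergeat0} is ensured by $i_{A_\ng}>\ngn$, and the reiteration $(A_\ng)_\ng\simeq\Asup$ identifies the optimal domain produced by Theorem~\ref{thm:OOD} precisely as $L^\Asup(\Rn)$. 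Conversely, if \eqref{eq:Asupindex} fails, the negative part of Theorem~\ref{thm:OOT} applied to $\Asup$ denies the existence of any optimal Orlicz target for $L^\Asup(\Rn)$, and hence, by the equivalence already proved, for $L^A(\Rn)$ as well.

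The principal technical hurdle will be the two identifications $(\Asup)_\ng\simeq\Ang$ and $(A_\ng)_\ng\simeq\Asup$. The first is a single-pass supremum computation telescoping a double sup into a single one. The second unfolds $(A_\ng)_\ng^{-1}$ via \eqref{eq:intout} (available under \eqref{eq:Asupindex}) and recognises the resulting expression $t^\gn\sup_{0<s\le t}A^{-1}(s)s^{-\gn}$ as $\Gsup(t)$. Once both identifications are secured, the remainder of the argument is a transparent bookkeeping of hypotheses across the previous theorems.
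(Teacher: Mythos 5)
Your proposal follows the paper's own route: the telescoping identity $(\Asup)_\ng\simeq\Ang$ combined with criterion (iii) of Theorem~\ref{thm:Orlicz_reduction} gives the equivalence of \eqref{eq:MgAB} and \eqref{eq:MgAsupB}, and the ``moreover'' part is obtained by applying Theorem~\ref{thm:OOT} to $\Asup$. You are in fact more explicit than the paper about the optimality of the domain, via Theorem~\ref{thm:OOD} with $B=A_\ng$ and the identification $(A_\ng)_\ng\simeq\Asup$; that computation is correct and worth writing out.

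One step needs repair. You deduce that \eqref{eq:Angindex} for $\Asup$ is equivalent to $i_\Asup>1$ from the final clause of Theorem~\ref{thm:OOT}, citing the structural bound $I_\Asup\le\ng$. That clause, however, assumes the strict inequality $I_\Asup<\ng$, and the borderline case $I_\Asup=\ng$ genuinely occurs: if, say, $A(t)=t^{\ng}$ near infinity, then $\Gng$ is constant there and $\Asup(t)\simeq t^{\ng}$ near infinity. The paper sidesteps this by not invoking that clause at all. The relation $(\Asup)_\ng^{-1}(t)\simeq t^{-\gn}\Asupi(t)$ holds unconditionally, because $\Asupi(s)\,s^{-\gn}\simeq \Gng(s)$ is nondecreasing by construction, so the supremum defining $(\Asup)_\ng^{-1}$ is attained at the right endpoint. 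This yields $h_{(\Asup)_\ng}(t)\simeq t^{-\gn}h_{\Asup}(t)$, hence $1/i_{(\Asup)_\ng}+\gn=1/i_{\Asup}$ by \eqref{BIdef}, and the equivalence $i_{(\Asup)_\ng}>\ngn\Leftrightarrow i_\Asup>1$ follows with no restriction on $I_\Asup$. With this substitution your argument is complete and matches the paper's.
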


At the end of this section, we present special cases
of the reduction principle for the spaces $L^1$ and $L^\infty$.

\begin{corollary}[Endpoint embeddings] \label{cor:endpoints} 
Let $n\in\N$ and $0<\gamma<n$ and suppose that $A$ and $B$ are Young functions.
Then the following statements hold true:
\begin{enumerate}
\item
\begin{equation*}
	M_\gamma\colon L^A(\Rn) \to L^\infty(\Rn)
\end{equation*}
if and only if there is a constant $C>0$ such that
\begin{equation*} 
	A(t) \ge C t^\ng
		\quad\text{for $t>0$};
\end{equation*}
\item 
\begin{equation*} 
	M_\gamma\colon L^1(\Rn) \to L^B(\Rn)
\end{equation*}
if and only if
\begin{equation*} 
	\int_{0}^\infty \frac{B(s)}{s^{\ngnd+1}}\,\d s < \infty.
\end{equation*}
\end{enumerate}
\end{corollary}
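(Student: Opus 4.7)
The plan is to derive both parts of the corollary as specialisations of the reduction principle Theorem~\ref{thm:Orlicz_reduction}, by computing the auxiliary Young functions $A_\ng$ and $B_\ng$ for the extreme choices corresponding to $L^1$ and $L^\infty$.

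For part (i), I take $B$ to be the Young function associated with $L^\infty$, namely $B = 0$ on $[0,1]$ and $B = \infty$ on $(1,\infty)$. The integrability condition \eqref{eq:Bconvergeat0} holds trivially since the integrand vanishes near zero. Directly from the definitions, $F_\ng$ is the step function equal to $0$ on $[0,1]$ and $\infty$ on $(1,\infty)$; its generalised right-continuous inverse equals $1$ for every $s>0$. Therefore $E_\ng(t) = t^\gn$ and $E_\ng^{-1}(s) = s^\ng$, and a single integration yields $B_\ng(t) = \frac{\gamma}{n}\,t^\ng$. Criterion (iv) then reads $t^\ng \lesssim A(C_4 t)$ for all $t>0$, which by the elementary convexity estimate \eqref{kt} is equivalent to the pointwise bound $A(t) \ge C\,t^\ng$ claimed in (i).

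For part (ii), I take $A(t) = t$, so $A^{-1}(s) = s$. Since $\ng > 1$, the infimum in \eqref{eq:Acond} reduces to $\inf_{0<t<1} t^{1-\ng} = 1 > 0$, so \eqref{eq:Acond} holds. The map $s \mapsto s^{1-\gn}$ is increasing because $1-\gn > 0$, so the supremum in the definition of $G_\ng$ is attained at $s=t$ and $G_\ng(t) = t^{1-\gn}$. Its inverse is $G_\ng^{-1}(s) = s^\ngn$, which upon integration gives $A_\ng(t) \simeq t^\ngn$. Because $\ngnd = \ngn$, the right-hand side $A_\ng(C_3 t)/t^\ngnd$ in criterion (iii) collapses to a constant, so (iii) becomes the uniform estimate
\[
	\int_0^t \frac{B(s)}{s^{\ngn+1}}\,\d s \le C \quad \text{for all } t > 0,
\]
which by monotonicity of the integral in $t$ is equivalent to $\int_0^\infty B(s)\,s^{-\ngn-1}\,\d s < \infty$.

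The only delicate point is the computation in part (i) with the $\{0,\infty\}$-valued Young function, where $F_\ng$ is discontinuous and the right-continuous generalised inverse must be interpreted at the jump. Once the identity $B_\ng(t) \simeq t^\ng$ is verified, both directions of both equivalences follow at once from Theorem~\ref{thm:Orlicz_reduction}, and no argument beyond the reduction principle is required.
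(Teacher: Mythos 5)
Your proof is correct and follows essentially the same route as the paper: both parts are obtained by specialising Theorem~\ref{thm:Orlicz_reduction}, computing $B_\ng(t)\simeq t^{\ng}$ for the $L^\infty$ Young function and $A_\ng(t)\simeq t^{\ngn}$ for $A(t)=t$, and then reading off criteria (iv) and (iii) respectively. The explicit computations of $F_\ng$, $E_\ng$ and $G_\ng$ that you carry out are exactly what the paper leaves implicit.
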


\section{Proofs} \label{sec:proofs}

We start with an auxiliary reduction principle for fractional maximal operator.
Note that the result remains valid also if we replace Orlicz
spaces by any so-called rearrangement-invariant function spaces.
For further details on this general setting we refer to \citep{Edm:18}.

\begin{proposition} \label{prop:ri_reduction}
Let $n\in\N$, $0<\gamma<n$ and
let $A$ and $B$ be Young functions.
The following statements are equivalent:
\begin{enumerate}
\item 
There is a constant $C_1>0$ such that
\[
	\|M_\gamma f\|_{L^B(\Rn)} \le C_1 \|f\|_{L^A(\Rn)},
		\quad f\in L^A(\Rn);
\]
\item There is a constant $C_2>0$ such that
\begin{equation*}
	\biggl\| t^{\gn -1}\int_{0}^{t} g(s)\,\d s \biggr\|_{L^B(0,\infty)}
		\le C_2 \|g\|_{L^A(0,\infty)},
		\quad g\in L^A(0,\infty).
\end{equation*}
\end{enumerate}
Moreover, the constants $C_1$ and $C_2$ depend on each other, on $n$ and $\gamma$.
\end{proposition}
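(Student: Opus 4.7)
The plan is to reduce the equivalence of (i) and (ii) to the following pair of pointwise rearrangement estimates for $M_\gamma$, valid with constants $c_1,c_2>0$ depending only on $n$ and $\gamma$: for every $f\in\Liloc(\Rn)$,
\begin{equation*}
(M_\gamma f)^*(t)\le c_2\, t^{\gn-1}\int_0^t f^*(s)\,\d s, \quad t>0,
\end{equation*}
while, for every nonincreasing $g\ge 0$ on $(0,\infty)$, the radially decreasing function $f_g(x)=g(\omega_n|x|^n)$, with $\omega_n=|B(0,1)|$, satisfies $f_g^*=g$ together with
\begin{equation*}
(M_\gamma f_g)^*(t)\ge c_1\, t^{\gn-1}\int_0^t g(s)\,\d s, \quad t>0.
\end{equation*}
Both bounds are classical for the fractional maximal operator within the rearrangement-invariant framework quoted just before the statement.

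Given these estimates, the implication (ii)$\Rightarrow$(i) is immediate from the rearrangement invariance of the Orlicz norm: since $\|M_\gamma f\|_{L^B(\Rn)}=\|(M_\gamma f)^*\|_{L^B(0,\infty)}$, the upper bound and (ii) applied to $g=f^*$ give $\|M_\gamma f\|_{L^B(\Rn)}\le c_2 C_2\|f^*\|_{L^A(0,\infty)}=c_2 C_2\|f\|_{L^A(\Rn)}$. For (i)$\Rightarrow$(ii), I would first reduce to nonincreasing $g\ge 0$, since the Hardy-type operator $g\mapsto t^{\gn-1}\int_0^t g$ only grows when $g$ is replaced by $g^*$ (we have $\int_0^t g\le\int_0^t g^*$), while Orlicz norms are rearrangement invariant. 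For such $g$, set $f=f_g$; then $\|f\|_{L^A(\Rn)}=\|g\|_{L^A(0,\infty)}$, and applying (i) to $f$ and invoking the lower bound yields the one-dimensional inequality with $C_2\le C_1/c_1$.

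The main obstacle is establishing the pointwise estimates, most notably the lower bound. The upper bound comes from the Hardy--Littlewood rearrangement inequality $|Q|^{\gn-1}\int_Q|f|\le|Q|^{\gn-1}\int_0^{|Q|}f^*(s)\,\d s$ followed by a standard covering and level-set argument to pass from the supremum over $Q\ni x$ to the nonincreasing rearrangement in $x$. The lower bound requires exploiting the radial structure of $f_g$: on the annular set $\{x:t/2\le\omega_n|x|^n\le t\}$, whose measure is comparable to $t$, one chooses the cube $Q$ centred at $x$ with sidelength comparable to $|x|$; such a cube contains a definite portion of the ball of volume comparable to $t$ around the origin, so $|Q|^{\gn-1}\int_Q f_g\gtrsim t^{\gn-1}\int_0^t g(s)\,\d s$, which transfers to the desired bound on $(M_\gamma f_g)^*(t)$ after a short calculation using the monotonicity of $g$.
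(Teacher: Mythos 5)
There is a genuine gap in the direction (ii)$\Rightarrow$(i): the pointwise upper bound
\begin{equation*}
(M_\gamma f)^*(t)\le c_2\, t^{\gn-1}\int_0^t f^*(s)\,\d s,\quad t>0,
\end{equation*}
which your argument rests on, is false. The sharp rearrangement estimate for $M_\gamma$ (Cianchi--Kerman--Opic--Pick) is
$(M_\gamma f)^*(t)\le c\,\sup_{t\le s<\infty} s^{\gn-1}\int_0^s f^*$, and the supremum cannot be dropped pointwise because $s\mapsto s^{\gn-1}\int_0^s f^*$ need not be nonincreasing. Concretely, take $f=\chi_B$ with $B$ the unit ball. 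For every $x\in B$ the smallest axis-parallel cube $Q\supset B$ gives $M_\gamma f(x)\ge |Q|^{\gn-1}|B|=c_n>0$, so $(M_\gamma f)^*(t)\ge c_n$ for all $t<|B|$; yet $t^{\gn-1}\int_0^t f^* = t^{\gn}\to 0$ as $t\to 0^+$. So your claimed bound fails for small $t$, and with it the deduction of (i) from (ii).

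The repair is not cosmetic: one must keep the supremum in the pointwise estimate and then remove it at the level of \emph{norms}, which is exactly what the paper does. It quotes the two-sided estimate $(M_\gamma f)^*(t)\simeq \sup_{t\le s<\infty}s^{\gn}f^{**}(s)$ from \citep[Theorem~1.1]{Cia:00a} and then invokes \citep[Theorem~3.9]{Ker:06}, a nontrivial result asserting that the boundedness of the Hardy-type operator $g\mapsto t^{\gn-1}\int_0^t g$ between the r.i.\ spaces already implies the boundedness of the associated supremum operator, with comparable constant. That step is the real content of this implication and is entirely missing from your proposal. By contrast, your treatment of (i)$\Rightarrow$(ii) is sound: the reduction to nonincreasing $g$ via the Hardy--Littlewood inequality and Hardy's lemma matches the paper, and your explicit radially decreasing test function yields the lower bound $(M_\gamma f_g)^*(t)\gtrsim t^{\gn-1}\int_0^t g$, which is a weaker (supremum-free) consequence of the sharp estimate and suffices for that direction.
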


\begin{proof}
Assume (ii).
By \citep[Theorem~1.1]{Cia:00a}, there is a constant $c_1>0$,
depending only on $n$ and $\gamma$,
such that
\[
	(M_\gamma f)^*(t) \le c_1 \sup_{t\le s <\infty} s^\gn f^{**}(s)
		\quad\text{for $t>0$}
\]
and for every $f\in \Liloc(\Rn)$.
Here, $f^{**}$ is the function defined by
\[
	f^{**}(t) = \frac{1}{t}\int_{0}^{t} f^*(s)\,\d s
		\quad\text{for $t>0$}.
\]
Then
\[
	\|M_\gamma f\|_{L^B(\Rn)}
		= \|(M_\gamma f)^* \|_{L^B(0,\infty)}
		\le c_1 \Bigl\| \sup_{t\le s <\infty} s^\gn f^{**}(s)
		\Bigr\|_{L^B(0,\infty)}.
\]
The supremum may be dropped paying another constant $c_2=c_2(n,\gamma)$, thanks
to \citep[Theorem~3.9]{Ker:06}. Hence
\[
	\|M_\gamma f\|_{L^B(\Rn)}
		\le c_1 c_2 \| t^\gn f^{**}(t) \|_{L^B(0,\infty)}
		\le c_1 c_2 C_2 \|f^*\|_{L^A(0,\infty)}
		= C_1 \|f\|_{L^A(\Rn)},
\]
due to the assumption (ii) where we take $g=f^*$.

Conversely, suppose (i). Let $\varphi\colon(0,\infty)\to[0,\infty)$ be 
a nonincreasing function. By \citep[Theorem~1.1]{Cia:00a},
there is a function $f$ on $\Rn$ such that $f^*=\varphi$ and
\[
	(M_\gamma f)^*(t) \ge c_3 \sup_{t\le s <\infty} s^\gn f^{**}(s)
		\quad\text{for $t\in(0,\infty)$}
\]
where $c_3=c_3(n,\gamma)$ is a positive constant independent of $f$.
We have
\begin{multline*}
	C_1 \|\varphi\|_{L^A(0,\infty)}
		= C_1 \|f\|_{L^A(\Rn)}
		\ge \|M_\gamma f\|_{L^B(\Rn)}
		= \|(M_\gamma f)^*\|_{L^B(0,\infty)}
		\\
		\ge c_3 \Bigl\|
			\sup_{t\le s <\infty} s^\gn f^{**}(s) \Bigr\|_{L^A(0,\infty)}
		\ge c_3 \biggl\|
			t^{\gn-1} \int_{0}^{t}\varphi(s)\,\d s \biggr\|_{L^A(0,\infty)},
\end{multline*}
hence (ii) holds for every nonincreasing function $\varphi$ with
$C_2=C_1/c_3$. The inequality (ii) for any function then
follows by Hardy's lemma.
\end{proof}

\begin{proof}[Proof of Theorem~\ref{thm:Orlicz_reduction}]
We begin with the preliminary statement, equivalent to~(i).
Recall the Hardy operator $H_\gn$, defined by
\[
	H_\gn f(t) = \int_{t}^{\infty} f(s)s^{\gn-1}\,\d s,
		\quad t\in (0,\infty),
\]
for $f\in\mathcal M(0,\infty)$ and its dual, $H_\gn'$ say, given by
\[
	H_\gn' g(t) = t^{\gn-1}\int_{0}^{t} g(s)\,\d s,
		\quad t\in (0,\infty),
\]
for $g\in\mathcal M(0,\infty)$.
By Proposition~\ref{prop:ri_reduction}, (i) holds if and only if
$H_\gn'$ is bounded from $L^A(0,\infty)$ to $L^B(0,\infty)$
with the operator norm comparable to $C_1$. Hence, by the duality,
(i) is equivalent to the following statement.
\begin{enumerate}
\item[\rm (i')]
There exist a constant $C_1'>0$ such that
\begin{equation*}
	\| H_\gn f \|_{L^{\tilde A}(0,\infty)} \le C_1' \|f\|_{L^{\tilde B}(0,\infty)}.
\end{equation*}
\end{enumerate}
Moreover the constants $C_1$ and $C_1'$ are comparable.

We now show (i')$\Leftrightarrow$(iii). By the combination
of \citep[Proposition~5.2 and Lemma~5.3]{Cia:18},
(i') holds true if and only if
\begin{equation} \label{feb2}
	\sup_{0<t<1} \tilde{A}(t)\,t^\ngn <\infty
\end{equation}
and
there is a constant $C_3'>0$ such that
the inequality
\begin{equation*}
	\int_{0}^{t} \frac{B(s)}{s^{\ngnd+1}}\,\d s
		\le \frac{E(C_3't)}{t^\ngnd},
		\quad t>0,
\end{equation*}
holds true, where $E$ is a Young function satisfying
\begin{equation*}
	\tilde{E}^{-1} (t)
		\simeq  t \inf_{0<s\le t}
			\tilde{A}^{-1}(s)\, s^{\gn - 1},
		\quad t>0.
\end{equation*}
Now, clearly \eqref{feb2} holds if and only if \eqref{eq:Acond} holds and $E$
is equivalent to $\Ang$, by passing to complementary Young functions using
\eqref{eq:YoungCompl}.

The equivalence of (i') and (iv) is a consequence of
a different method of characterization of the boundedness
of the Hardy operator $H_\gn$ in Orlicz spaces \citep[Lemma~1]{Cia:96} (see
also \citep[Section 9.3]{Rao:02}).
We moreover use the equivalent formulation of $B_\ng$ which might be
derived from \citep[(4.6) and (4.7)]{Cia:96}.

Let us establish (ii) from (i).
Suppose that $N>0$ is given and set $A_N=A/N$, a scaled Young function.
Then
\[
	(A_N)_\ng(t) = \frac{1}{N} A_\ng \bigl( tN^{-\gn} \bigr),
		\quad t\ge 0,
\]
where $(A_N)_\ng$ is a Young function associated to $A_N$ as in \eqref{eq:AngDef}.
Define also $B_N$ by
\[
	B_N(t) = \frac{1}{N} B\bigl( tN^{-\gn} \bigr),
		\quad t\ge 0.
\]
We claim that
\begin{equation} \label{feb4}
	\|M_\gamma f\|_{L^{B_N}(\Rn)} \le C_2 \|f\|_{L^{A_N}(\Rn)}
\end{equation}
for all $f\in L^{A_N}(\Rn)$ with the constant $C_2$ independent of $N$.
Indeed, as one can readily check by the change of variables, (iii) holds
with $A$ and $B$ replaced by $A_N$ and $B_N$ respectively with
the same constant $C_3$. The claim follows by the already proven equivalence
of (i) and (iii).

Now, let $f\in L^A(\Rn)$. If $\int_{\Rn} A(|f|) = \infty$, then
there is nothing to prove. Otherwise, set $N=\int_{\Rn} A(|f|)$.
It is $\|f\|_{L^{A_N}(\Rn)} \le 1$
and, by \eqref{feb4}, $\|f\|_{L^{B_N}(\Rn)} \le C_2$.
Therefore
\[
	\int_{\Rn} B_N \biggl( \frac{M_\gamma f(x)}{C_2} \biggr)\,\d x
		\le 1
\]
and (ii) follows by the definition of $B_N$.

The converse implication (ii)$\Rightarrow$(i) is trivial.
\end{proof}

\begin{proof}[Proof of Corollary~\ref{cor:endpoints}]
(i) Suppose that $B(t)=0$ on $[0,1]$ and $B(t)=\infty$ for $t>1$.
Then $L^B(\Rn) = L^\infty(\Rn)$, \eqref{eq:Bconvergeat0}
holds and, by \eqref{eq:BngDef},
$B_\ng$ is equivalent to $t^\ng$ on $[0,\infty)$.
By Theorem~\ref{thm:Orlicz_reduction} (the equivalence of (i) and (iv)),
we have (i).
%\eqref{eq:MgALinfty} holds if and only if \eqref{eq:Aineq} holds.

(ii) Let us set $A(t)=t$, $t\ge 0$, so $L^A(\Rn)=L^1(\Rn)$.
Clearly, $A$ satisfies \eqref{eq:Acond} and, by \eqref{eq:AngDef},
$A_\ng(t)$ is equivalent to $t^\ngn$ on $[0,\infty)$.
Thus (ii) follows by Theorem~\ref{thm:Orlicz_reduction},
the equivalence of (i) and (iii).
%\eqref{eq:MgL1B} is equivalent to \eqref{eq:Bconverge}.
\end{proof}

\begin{lemma} \label{lemm:inheritedind}
Let $B$ be a Young function satisfying \eqref{eq:Bconvergeat0} and
let $F_\ng$ be defined as in \eqref{eq:FngDef}.
Then  $F_\ng$ is a Young function and
\[
	i_{F_\ng} > \ngn
	\quad\text{if and only if}\quad
	i_B > \ngn.
\]
Furthermore, if this is the case, then $F_\ng$ is equivalent to $B$.
\end{lemma}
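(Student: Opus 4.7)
The plan is to establish directly that $F_\ng$ is equivalent to $B$ under either of the two conditions; since equivalent Young functions share the same Boyd indices, this immediately delivers the stated iff together with the ``furthermore'' clause.

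I would first verify that $F_\ng$ is a Young function. After the substitution $s = tu$,
\[
	F_\ng(t) = \int_{0}^{1} B(tu)\, u^{-\ngnd - 1}\,\d u
\]
exhibits $F_\ng$ as a positively weighted integral of the convex functions $t \mapsto B(tu)$, and \eqref{eq:Bconvergeat0} takes care of finiteness and of $F_\ng(0) = 0$. Differentiating the original definition almost everywhere yields the identity $t F_\ng'(t) = \ngnd\, F_\ng(t) + B(t)$, so $B(t) \le t F_\ng'(t) \le F_\ng(2t)$ by \eqref{eq:A}; hence $F_\ng$ dominates $B$ unconditionally.

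For the implication $i_B > \ngn$ implies equivalence, I would fix $p$ with $\ngnd < p < i_B$ and use the Boyd-index characterisation that $B(s)/s^p \le C\, B(t)/t^p$ for all $0 < s \le t$; the integral defining $F_\ng$ then collapses to a constant multiple of $B(t)$ because $p > \ngnd$ makes $s^{p - \ngnd - 1}$ integrable at the origin, giving that $B$ dominates $F_\ng$ as well. For the reverse implication, assume $i_{F_\ng} > \ngn$, fix $p$ with $\ngnd < p < i_{F_\ng}$, and use the analogous lower bound $F_\ng(Kt) \ge c K^p F_\ng(t)$ for $K \ge 1$. Splitting
\[
	F_\ng(Kt) = K^{\ngnd} F_\ng(t) + K^{\ngnd} t^{\ngnd} \int_{t}^{Kt} \frac{B(s)}{s^{\ngnd+1}}\,\d s,
\]
choosing $K$ so large that $c K^p \ge 2 K^{\ngnd}$, and crudely bounding $B(s) \le B(Kt)$ for $s \le Kt$, an elementary calculation produces $F_\ng(t) \le \tfrac{n-\gamma}{n}\, B(Kt)$, so that $B$ again dominates $F_\ng$.

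The main technical care needed is in the correct invocation of the Boyd-index characterisation, namely the conversion of the limit definition $i_A > p$ from \eqref{BIdef} into the global pointwise inequality $A(u)/u^p \le C\, A(v)/v^p$ valid for all $0 < u \le v$ (rather than only for $v$ large). This standard fact follows from the submultiplicativity of $h_A$ and its boundedness on compact subintervals of $(0,\infty)$, but must be applied uniformly in the base point.
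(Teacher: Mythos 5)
Your proof is correct and follows essentially the same route as the paper: your reverse direction is the paper's computation (split $F_\ng(Kt)$ at $t$ and bound the tail integral $\int_t^{Kt}B(s)s^{-\ngnd-1}\,\d s$ by a constant times $B(Kt)t^{-\ngnd}$), and your forward direction establishes precisely the inequality $F_\ng(t)\le CB(t)$ that the paper obtains by citing \citep[Proposition~4.1]{Cia:18}. The only difference is that you derive the two Boyd-index facts ($B(s)/s^p$ almost increasing for $p<i_B$, and $F_\ng(Kt)\ge cK^pF_\ng(t)$ for $p<i_{F_\ng}$) directly from the submultiplicativity of $h_B$ instead of invoking that proposition, and you close the equivalence of indices via ``equivalent Young functions have equal Boyd indices'' rather than by a second application of it.
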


\begin{proof}
Suppose that $i_B > \ngn$. Then, by \citep[Proposition~4.1]{Cia:18},
$B$ is equivalent to $F_\ng$ and therefore $i_{F_\ng}>\ngn$.

Conversely, suppose that $i_{F_\ng} > \ngn$. Then, thanks to
\citep[Proposition 4.1]{Cia:18}, there exist $\sigma>1$ and $c>1$
such that
\begin{equation} \label{mar1}
	F_\ng (\sigma t) \ge c \sigma^\ngn F_\ng(t),
		\quad t>0.
\end{equation}
Let us write $c=1+\varepsilon$ for some $\varepsilon>0$. Then
\eqref{mar1} becomes
\begin{equation*}
	\int_0^{t}
			\frac{B(s)}{s^{\ngnd+1}}\d s
		\le \frac{1}{\varepsilon} \int_{t}^{\sigma t}
			\frac{B(s)}{s^{\ngnd+1}}\d s
		\le B (\sigma t) \frac{1}{\varepsilon} \int_{t}^{\sigma t}
			\frac{1}{s^{\ngnd+1}}\d s
		\le \frac{B(kt)}{t^\ngnd}
\end{equation*}
for every $t>0$ and for a suitable constant $k$.
Another use of \citep[Proposition 4.1]{Cia:18} gives that $i_B>\ngn$.
\end{proof}

\begin{proof}[Proof of Theorem~\ref{thm:OOD}]
Assume that $B$ satisfies \eqref{eq:Bconvergeat0}. Then $B_\ng$ is well-defined
and \eqref{eq:MgBngB} holds by Theorem~\ref{thm:Orlicz_reduction}.
To observe the optimality, assume that $L^A(\Rn)$ satisfies \eqref{eq:MgAB}. Then,
again due to Theorem~\ref{thm:Orlicz_reduction}, $A$ dominates $B_\ng$, whence
$L^{A}(\Rn)\to L^{B_\ng}(\Rn)$ and $L^{B_\ng}(\Rn)$ is optimal.

If \eqref{eq:Bconvergeat0} is violated, then no Orlicz space $L^A(\Rn)$ might
satisfy \eqref{eq:MgAB} since otherwise it would contradict 
Theorem~\ref{thm:Orlicz_reduction}.

Furthermore, if $i_B > \ngn$, then Lemma~\ref{lemm:inheritedind} gives
that $F_\ng$ is equivalent to $B$ and the formula \eqref{eq:intout} is immediate.
\end{proof}

\begin{lemma}\label{lemm:construction}
Let $n\in\N$, $0<\gamma<n$ and let $D$ be a Young function
such that $D(t)/t^\ngnd$ is nondecreasing,
\begin{equation} \label{eq:limD0}
	\lim_{t\to 0} \frac{D(t)}{t^\ngnd} = 0
\end{equation}
and
\begin{equation} \label{eq:supintD}
	\sup_{0<t<1} \frac{t^\ngnd}{D(Kt)} \int_{0}^{t} \frac{D(s)}{s^{\ngnd+1}}\,\d s
		= \infty
\end{equation}
for every $K\ge 1$.
Suppose that $B$ is a Young such that
\begin{equation} \label{eq:ineqDBzero}
	\int_{0}^{t} \frac{B(s)}{s^{\ngnd+1}}\,\d s
		\le \frac{D(Ct)}{t^\ngnd},
		\quad 0<t<1,
\end{equation}
for some $C\le 1$.
Then there exists a Young function $B_1$ essentially dominating $B$
and satisfying 
\begin{equation} \label{eq:ineqDB1zero}
	\int_{0}^{t} \frac{B_1(s)}{s^{\ngnd+1}}\,\d s
		\le \frac{D(5Ct)}{t^\ngnd},
		\quad 0<t<1.
\end{equation}
\end{lemma}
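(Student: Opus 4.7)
The plan is to exploit the slack given by (\ref{eq:supintD}) by enlarging $B$ at a carefully chosen sequence of scales $\tau_k\searrow 0$. Using (\ref{eq:supintD}) with $K=2K_kC$ for a prescribed $K_k\to\infty$ (say $K_k=2^k$), I would inductively select $\tau_k\searrow 0$ with $\tau_{k+1}\le\tau_k/4$ satisfying
\[
\tau_k^{\ngnd}\int_0^{\tau_k}\frac{D(s)}{s^{\ngnd+1}}\,ds\;\ge\;4^k\,D(2CK_k\tau_k).
\]
Writing $b$ for the nondecreasing left-continuous right-derivative of $B$, I would then define
\[
b_1(s):=b(s)+\sum_{k\ge 1}c_k\chi_{[\tau_k,\infty)}(s),\qquad c_k:=\frac{B(K_k\tau_k)}{\tau_k},
\]
so that $b_1$ is nondecreasing and left-continuous, and
\[
B_1(t):=\int_0^t b_1(s)\,ds = B(t)+\sum_{k:\tau_k\le t}c_k(t-\tau_k)
\]
is a Young function.

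Essential dominance follows by evaluating at $t=2\tau_j$: only the $k=j$ term is guaranteed to contribute $c_j\tau_j=B(K_j\tau_j)$, so by convexity of $B$ (i.e.~monotonicity of $s\mapsto B(s)/s$), for any $\lambda\ge 1$ and any $j$ with $K_j\ge 4\lambda$,
\[
\frac{B_1(2\tau_j)}{B(4\lambda\tau_j)}\;\ge\;\frac{B(K_j\tau_j)}{B(4\lambda\tau_j)}\;\ge\;\frac{K_j}{4\lambda}\;\to\;\infty.
\]
For the integral inequality, a direct computation gives
\[
\int_0^t\frac{B_1(s)-B(s)}{s^{\ngnd+1}}\,ds\;\le\;K_\alpha\sum_{k:\tau_k\le t}c_k\tau_k^{1-\ngnd},
\]
with $K_\alpha=[\ngnd(\ngnd-1)]^{-1}$. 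Lower-bounding $\int_{K_k\tau_k}^{2K_k\tau_k}B(s)/s^{\ngnd+1}\,ds$ in (\ref{eq:ineqDBzero}) yields the pointwise estimate $B(K_k\tau_k)\le c_*D(2CK_k\tau_k)$ for a constant $c_*$ depending only on $n,\gamma$, which combined with the scale selection converts the sum into $c_*\sum_{k:\tau_k\le t}4^{-k}\int_0^{\tau_k}D(s)/s^{\ngnd+1}\,ds$ up to absolute constants.

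The principal technical difficulty is to absorb this sum into the available slack $(D(5Ct)-D(Ct))/t^{\ngnd}\ge 4D(Ct)/t^{\ngnd}$ uniformly for $t\in(0,1)$. Since by (\ref{eq:supintD}) the integral $\int_0^{\tau_k}D/s^{\ngnd+1}\,ds$ may itself be arbitrarily large compared to $D(C\tau_k)/\tau_k^{\ngnd}$, the crude bound $\int_0^{\tau_k}D/s^{\ngnd+1}\,ds\le\int_0^tD/s^{\ngnd+1}\,ds$ is insufficient. The inductive choice of $\tau_k$ in the first step must therefore incorporate an additional smallness requirement ensuring that the added integral from bumps at scales $\tau_j$ with $j\ge k$ is already dominated by $4D(C\tau_k)/(K_\alpha\tau_k^{\ngnd})$; the freedom to take $\tau_k$ arbitrarily small granted by (\ref{eq:supintD}) (which holds for every $K\ge 1$) makes such a selection possible, and this delicate accounting is the heart of the argument.
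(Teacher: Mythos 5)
Your construction is set up sensibly --- sparse bumps at scales $\tau_k$, with essential domination read off at $t=2\tau_j$ from the slope $c_j=B(K_j\tau_j)/\tau_j$ and the monotonicity of $B(s)/s$; that part is correct. But the proof has a genuine gap at exactly the point you yourself call ``the heart of the argument'': you never produce a selection of the $\tau_k$ for which the added integral is absorbed into the slack, and the selection criterion you do write down cannot do the job. Your criterion controls $D(2CK_k\tau_k)/\tau_k^{\ngnd}$ by $4^{-k}\int_0^{\tau_k}D(s)\,s^{-\ngnd-1}\,\d s$, but, as you note, \eqref{eq:supintD} says precisely that this integral can be arbitrarily large compared with $D(C\tau_k)/\tau_k^{\ngnd}$ --- indeed it may be $+\infty$ for every $\tau_k$, since \eqref{eq:limD0} does not force integrability of $D(s)s^{-\ngnd-1}$ near zero --- so the resulting bound on the bump contributions is not comparable to the available slack of order $D(Ct)/t^{\ngnd}$. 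What is actually needed is a pointwise smallness statement: for every $K\ge1$, $\liminf_{t\to0^+}B(Kt)/D(t)=0$, so that one can pick $\tau_k$ with $B(K_k\tau_k)\le\varepsilon_k D(C\tau_k)$ for prescribed $\varepsilon_k$. This is where \eqref{eq:supintD} must really enter, by contraposition: if $B(Ks)\ge L\,D(s)$ near zero, then integrating and using \eqref{eq:ineqDBzero} gives $\int_0^tD(s)\,s^{-\ngnd-1}\,\d s\le L^{-1}D(CKt)\,t^{-\ngnd}$, contradicting \eqref{eq:supintD}. You never make this step, and ``the freedom to take $\tau_k$ arbitrarily small'' is not by itself a proof that admissible $\tau_k$ exist.

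For comparison, the paper resolves the absorption by calibrating the bump to the slack rather than by choosing the scales small: on each interval $(t_k,\tau_k)$, where $\tau_k$ is defined by $B(\tau_k)/\tau_k=D(t_k)/t_k$, it inserts a linear piece whose slope is by construction at most $2D(t_k)/t_k$, so each bump contributes at most $2D(t_k)/t_k^{\ngnd}$ to the integral --- exactly the size of the slack at scale $t_k$ --- and then \eqref{eq:limD0} is used (as in \eqref{feb14}) to pass to a subsequence along which $D(t_k)/t_k^{\ngnd}$ decays geometrically, making the total added contribution summable and dominated by $4D(t)/t^{\ngnd}$. Essential domination is then extracted from \eqref{eq:supintD} via the limit relation \eqref{feb111}, which is the contrapositive argument described above. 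Note that your proposal uses \eqref{eq:limD0} nowhere; that hypothesis is essential for the summability of the bump contributions, which is a further sign that the accounting step is missing rather than merely unelaborated.
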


\begin{proof}
Let $D$ and $B$ be the Young functions from the statement.
Fix $t\in(0,1)$ and define the set $G_t$ by
\[
	G_t = \bigl\{ s\in(0,1): \tfrac{B(s)}{s} \le \tfrac{D(t)}{t} \bigr\}.
\]
We claim that $B(s)/s$ is a nondecreasing mapping from $(0,1)$ onto 
some neighbourhood of zero, and hence the sets $G_t$ are nonempty
for every $t\in(0,1)$.
Indeed, if $\lim_{s\to 0+} B(s)/s > 0$, then
$B(s)\ge cs$ on $(0,1)$ for some $c>0$ and thus
\[
	\int_{0} \frac{B(s)}{s^{\ngnd+1}}\,\d s
		\ge c \int_{0} \frac{\d s}{s^\ngnd} = \infty
\]
which contradicts \eqref{eq:ineqDBzero}.

Let us define $\tau=\tau_t=\sup G_t$.
Observe that, by the continuity of Young functions,
\begin{equation} \label{feb10}
	\frac{B(\tau)}{\tau}
		= \frac{D(t)}{t},
		\quad 0<t<1.
\end{equation}
Also,
\begin{equation}
		\label{feb111}
	\limsup_{t\to\infty}	\frac{B(\tau_t)}{\tau_t}\cdot \frac{t}{B(Kt)}=\infty
\end{equation}
for every $K\ge 1$.
Indeed, suppose that there is some $K\ge 1$ for which \eqref{feb111} is violated.
We then have some $L>0$ such that
\[
	\frac{B(Kt)}{t} \ge L \frac{B(\tau)}{\tau},
		\quad 0<t<1,
\]
which in connection with \eqref{feb10} gives $B(Ks)\ge L\, D(s)$ on $(0,1)$. 
Thus
\begin{multline*}
	\frac{D(CKt)}{t^\ngnd}
		\ge \int_{0}^{Kt} \frac{B(s)}{s^{\ngnd+1}}\,\d s 
		= K^\nng \int_{0}^{t} \frac{B(Ks)}{s^{\ngnd+1}}\,\d s
		\\
		\ge LK^\nng \int_{0}^{t} \frac{D(s)}{s^{\ngnd+1}}\,\d s,
		\quad 0<t<1,
\end{multline*}
which contradicts \eqref{eq:supintD} and therefore \eqref{feb111} holds true.

Next, by\eqref{feb111}, we can take a decreasing sequence
$t_k\in(0,1)$, $k\in\N$, such that
\begin{equation} \label{feb112}
	\lim_{k\to\infty} \frac{B(\tau_k)}{\tau_k} \cdot \frac{t_k}{B(kt_k)}=\infty,
\end{equation}
where we set $\tau_k=\tau_{t_k}$.
Without loss of generality we may assume that $2t_k<\tau_k$ for every $k\in\N$.
For contradiction, suppose that there exists a subsequence $\{k_j\}$ such that
$\tau_{k_j}\le 2t_{k_j}$. Then, since $B(s)/s$ does not increase and thanks to
\eqref{kt},
\[
	\frac{B(\tau_{k_j})}{\tau_{k_j}}\cdot \frac{t_{k_j}}{B(k_j t_{k_j})}
		\le \frac{B(2t_{k_j})}{2t_{k_j}}\cdot \frac{t_{k_j}}{B(k_j t_{k_j})}
		\le \frac{B(2t_{k_j})}{B(2t_{k_j})}\cdot \frac{1}{k_j}
		= \frac{1}{k_j}\to 0
			\quad\text{as}\quad k\to\infty,
\]
which is impossible due to \eqref{feb112}.
We may also require that $t_{k+1}$ is chosen in a way that
$2t_{k+1} \le \tau_{k+1} < t_{k}$, which is ensured if
$\tau_t \to 0$ as $t\to 0$. To observe that, by \eqref{feb10}, we need
to have $\lim_{t\to 0+} D(t)/t=0$ which is however guaranteed
by the stronger condition \eqref{eq:limD0}.
Furthermore, from \eqref{eq:limD0} take $t_{k+1}$ small enough so that
\begin{equation} \label{feb14}
	\frac{D(t_{k+1})}{t_{k+1}^\ngnd}
		\le \frac{1}{2}\cdot \frac{D(t_k)}{t_k^\ngnd}.
\end{equation}

We now define a function $B_1$ by the formula
\[
	B_1(t) =
	\begin{cases}\displaystyle
		B(t_k) +
			\frac{B(\tau_k) - B(t_k)}{\tau_k - t_k}\,(t - t_k),
			& t\in(t_k,\tau_k),\;k\in\N,\\
		B(t),
			& \text{otherwise}.
	\end{cases}
\]
Obviously, $B_1$ is a well-defined Young function and $B_1\ge B$.
Moreover, for $k\in\N$, $2B(t_k) \le B(\tau_k)$ by \eqref{kt} and 
therefore
\begin{equation*}
	\frac{B_1(2t_k)}{B(kt_k)}
		= \frac{B(t_k) +
				\frac{B(\tau_k) - B(t_k)}{\tau_k - t_k}\, t_k}
				{B(kt_k)}
		\ge \frac{ B(\tau_k) - B(t_k)}{B(kt_k)}
			\cdot \frac{t_k}{\tau_k}
		\ge \frac12\cdot \frac{B(\tau_k)}{\tau_k}
			\cdot \frac{t_k}{B(kt_k)}
\end{equation*}
and the latter tends to infinity as $k\to\infty$ by \eqref{feb112}.
Consequently
\[
	\limsup_{t\to 0+} \frac{B_1(t)}{B(\lambda t)} = \infty
\]
for every $\lambda\ge 1$ and $B_1$ essentially dominates $B$.
It remains to show that $B_1$ fulfills \eqref{eq:ineqDB1zero}.
Let $t\in(0,1)$ be fixed and let $j\in\N$ be such that $t\in[t_j,t_{j+1})$.
Then we have
\begin{equation} \label{feb11}
	\int_{0}^{t} \frac{B_1(s)}{s^{\ngnd+1}}\,\d s
		\le \int_{0}^{t} \frac{B(s)}{s^{\ngnd+1}}\,\d s
			+ \sum_{k=j}^\infty \frac{B(\tau_k) - B(t_k)}{\tau_k - t_k}
				\int_{t_k}^{\tau_k} \frac{s-t_k}{s^{\ngnd+1}}\,\d s.
\end{equation}
By the assumption, the former integral is dominated by
the right hand side of \eqref{eq:ineqDBzero}.
Let us follow with estimates of the latter sum.
Thanks to $2t_k<\tau_k$ and \eqref{feb10},
\begin{equation} \label{feb12}
	\frac{B(\tau_k) - B(t_k)}{\tau_k - t_k}
		\le \frac{2B(\tau_k)}{\tau_k}
		= \frac{2D(t_k)}{t_k}
\end{equation}
and since $\ngnd>1$, we have
\begin{equation} \label{feb13}
	\int_{t_k}^{\tau_k}\frac{(s-t_k)}{ s^{\ngnd+1}}\,\d s
		\le \int_{t_k}^{\infty} \frac{1}{ s^\ngnd}\,\d s
		\le \frac{1}{t_k^{\ngnd-1}}.
\end{equation}
Combination of \eqref{feb11} and \eqref{feb12} with \eqref{feb13} then gives
\begin{equation} \label{feb15}
	\int_{0}^{t} \frac{B_1(s)}{s^{\ngnd+1}}\,\d s
		\le \frac{D(Ct)}{t^\ngnd} + 2\sum_{k=j}^\infty \frac{D(t_k)}{t_k^\ngnd}.
\end{equation}
It follows from \eqref{feb14} that
\[
	\frac{D(t_k)}{t_k^\ngnd}
		\le 2^{k-j} \frac{D(t_j)}{t_j^\ngnd}
		\le 2^{k-j} \frac{D(t)}{t^\ngnd},
		\quad j\le k <\infty,
\]
whence
\begin{equation} \label{feb16}
	\sum_{k=j}^\infty \frac{D(t_k)}{t_k^\ngnd}
		\le \frac{D(t)}{t^\ngnd} \sum_{j=k}^\infty 2^{k-j}
		\le \frac{2D(t)}{t^\ngnd}.
\end{equation}
The inequality \eqref{eq:ineqDB1zero} thus follows by \eqref{feb15} and \eqref{feb16}.
\end{proof}

\begin{lemma} \label{lemm:construction_bounded}
Let $B$ be a Young function satisfying
\begin{equation} \label{feb17}
	\int_{0}^{1} \frac{B(s)}{s^{\ngnd+1}}\,\d s < \infty.
\end{equation}
Then there is a Young function $B_1$ such that $B_1$ essentially dominates
$B$ and also
\begin{equation} \label{feb18}
	\int_{0}^{1} \frac{B_1(s)}{s^{\ngnd+1}}\,\d s < \infty.
\end{equation}
\end{lemma}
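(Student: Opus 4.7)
The plan is to construct $B_1$ by adding controlled ``jumps'' to the right-derivative $b$ of $B$ at a sparse decreasing sequence $t_k \to 0^+$. Each jump height $h_k$ will be chosen large enough, relative to $B(k t_k)$, to force essential domination at a sequence of test points, and small enough, relative to $t_k^{\ngnd-1}$, that the weighted integral of the resulting function remains finite.

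I would first observe that the hypothesis implies the pointwise limit $\lim_{t\to 0^+} B(t)/t^\ngnd = 0$. Indeed, monotonicity of $B$ gives
\[
	\frac{B(t)}{t^\ngnd}
		\le \frac{\ngnd}{1-2^{-\ngnd}}
			\int_{t}^{2t} \frac{B(s)}{s^{\ngnd+1}}\,\d s,
\]
and the right-hand side tends to zero by absolute continuity of the integral. Using this limit, I would then choose $t_k \to 0^+$ inductively with $t_{k+1} < t_k/2$, small enough that $h_k := k B(k t_k)/t_k$ satisfies $h_k / t_k^{\ngnd-1} \le 2^{-k}$; rewriting this condition as $k^{\ngnd+1}\,B(kt_k)/(kt_k)^\ngnd \le 2^{-k}$ shows that the preliminary limit makes such a choice possible. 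I then define
\[
	B_1(t) = B(t) + \sum_{k=1}^{\infty} h_k\, (t - t_k)^+,
	\quad t \ge 0,
\]
which amounts to adding atoms of weight $h_k$ at the points $t_k$ to the defining measure of $b$. Since $\sum h_k < \infty$, $B_1$ is a well-defined Young function with $B_1 \ge B$.

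For the weighted integral, Fubini combined with the elementary estimate $\int_{0}^{1}(s-t_k)^+/s^{\ngnd+1}\,\d s \le t_k^{1-\ngnd}/(\ngnd-1)$ reduces the bound on $\int_0^1 (B_1 - B)(s)/s^{\ngnd+1}\,\d s$ to the summability of $h_k/t_k^{\ngnd-1}$, which was built in. For essential domination, I would test at the points $s_k = 2 t_{k+1}$: a direct lower bound gives
\[
	B_1(2 t_{k+1}) \ge h_{k+1}\, t_{k+1} = (k+1)\, B((k+1) t_{k+1}),
\]
so monotonicity of $B$ yields $B_1(2 t_{k+1})/B(2\lambda t_{k+1}) \ge k+1$ as soon as $k+1 \ge 2\lambda$, forcing the ratio to be unbounded for every fixed $\lambda \ge 1$.

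The main obstacle is the inductive selection of the sequence $t_k$: one must jointly enforce the monotonicity $t_{k+1} < t_k/2$, the smallness required to make $k^{\ngnd+1} B(kt_k)/(kt_k)^\ngnd \le 2^{-k}$, and the summability of the resulting jump heights. These constraints are compatible thanks to the preliminary limit $B(u)/u^\ngnd \to 0$, but the bookkeeping must be arranged carefully so that $kt_k$ lies in the regime where that limit already gives the desired smallness at each step.
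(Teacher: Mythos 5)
Your construction is genuinely different from the paper's. The paper does not build $B_1$ directly here: it manufactures an auxiliary Young function $D$ (piecewise of the form $d_k t^{\ngnd}$ with $d_k=1/\log(k+1)$, the points $t_k$ chosen so that $\int_0^{t_k}B(s)s^{-\ngnd-1}\,\d s\le d_k$ while $\int_0^{t}D(s)s^{-\ngnd-1}\,\d s$ diverges) and then feeds the pair $(B,D)$ into Lemma~\ref{lemm:construction}, whose linear-interpolation construction produces $B_1$. Your route --- adding jumps of height $h_k=kB(kt_k)/t_k$ to the derivative of $B$ at a sparse sequence $t_k$, calibrated so that $\sum_k h_k t_k^{1-\ngnd}<\infty$ --- is self-contained and shorter. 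The individual steps check out: the preliminary limit $B(t)t^{-\ngnd}\to0$ is correct, the inductive choice of $t_k$ is feasible because $t_k$ may be taken as small as needed at each step, the convergence of $\sum_k h_k$ makes $B_1$ a Young function, the Fubini estimate gives \eqref{feb18}, and the test at $2t_{k+1}$ gives essential domination whenever $B$ is positive near the origin. What you give up is only the reuse of Lemma~\ref{lemm:construction}, which the paper needs anyway for the case \eqref{feb7}; what you gain is a proof of this lemma that does not depend on that more delicate construction.

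There is one genuine, though easily repaired, gap: the hypothesis \eqref{feb17} permits $B$ to vanish on a whole interval $[0,t_0]$ (for instance $B(t)=(t-1)^+$), and such $B$ are within the scope of the application in the proof of Theorem~\ref{thm:OOT}. In that case $B(kt_k)=0$ for all large $k$, hence $h_k=0$, your $B_1$ coincides with $B$ near zero, and the essential-domination test degenerates to $0/0$; the conclusion of the lemma then fails for your $B_1$. The lemma itself remains true for such $B$, since any Young function that is positive on $(0,t_0)$ and has finite weighted integral essentially dominates it there. So it suffices either to dispose of this case separately, or to replace $h_k$ by $h_k+2^{-k}t_k^{\ngnd-1}$, which keeps every jump strictly positive without affecting the summability of $h_k t_k^{1-\ngnd}$. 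With that amendment your argument is complete.
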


\begin{proof}
Assume that $B$ is given and fulfills \eqref{feb17}.
Let us define $d_k = 1/\log(k+1)$, $k\in\N$, and
let $t_k$, $k\in\N$, satisfy $t_k \le t_{k-1}/k$ and
\[
	\int_{0}^{t_k} \frac{B(s)}{s^{\ngnd+1}}\,\d s \le d_k
		\quad\text{for $k\in\N$}.
\]
Then define $D_1$ by
\[
	D_1(t) = \sum_{k=1}^\infty d_k t^\ngn \chi_{[t_{k+1},t_k)}(t)
		+ Ct^\ngn\chi_{[1,\infty)}(t),
		\quad t\ge 0,
\]
where $C$ is a constant which dominates the integral in \eqref{feb17},
and set $D$ by
\[
	D(t) = \int_{0}^{2t} \frac{D_1(s)}{s}\,\d s,
		\quad t\ge 0.
\]
Since $D_1$ is nondecreasing, $D$ is a Young function and $D_1(t) \le D(t)$.

We shall show that $B$ and $D$ satisfy the assumptions of Lemma~\ref{lemm:construction}.
Clearly $D(t)/t^\ngnd$ is nondecreasing and
$\lim_{t\to 0+} D(t)/t^\ngnd = \lim_{k\to\infty} d_k = 0$.
Next, let $t\in(0,1)$ be arbitrary and let $j\in\N$ be such that
$t\in[t_{j+1},t_j)$. Then
\begin{align*}
	\int_{0}^{t} \frac{D(s)}{s^{\ngnd+1}}\,\d s
		& \ge \int_{0}^{t} \frac{D_1(s)}{s^{\ngnd+1}}\,\d s
		\ge \sum_{k=j+1}^\infty \int_{t_{k+1}}^{t_k} \frac{D_1(s)}{s^{\ngnd+1}}\,\d s
			\\
		& \ge \sum_{k=j+1}^\infty d_k \int_{t_{k+1}}^{t_k} \frac{\d s}{s}
		\ge \sum_{k=j+1}^\infty d_k \log(k+1) = \infty
\end{align*}
which gives \eqref{eq:supintD} and also
\[
	\int_{0}^{t} \frac{B(s)}{s^{\ngnd+1}} \,\d s
		\le \int_{0}^{t_j} \frac{B(s)}{s^{\ngnd+1}} \,\d s
		= d_j
		= \frac{D_1(t)}{t^\ngnd}
		\le \frac{D(t)}{t^\ngnd}
\]
which is \eqref{eq:ineqDBzero}. Lemma~\ref{lemm:construction} gives us
a Young function $B_1$ essentially dominating $B$ such that
\[
	\int_{0}^{t} \frac{B_1(s)}{s^{\ngnd+1}} \,\d s
		\le \frac{D(5t)}{t^\ngnd},
		\quad 0<t<1.
\]
Then we have \eqref{feb18} as a special case.
\end{proof}

\begin{proof}[Proof of Theorem~\ref{thm:OOT}]
Let $A$ be a Young function satisfying \eqref{eq:Acond}
and assume \eqref{eq:Angindex}.
By \citep[Proposition 4.1]{Cia:18}, \eqref{eq:Angindex} is
equivalent to the inequality
\begin{equation} \label{feb5}
	\int_{0}^{t} \frac{A_\ng(s)}{s^{\ngnd+1}}\,\d s
		\le \frac{A_\ng(Ct)}{t^\ngnd},
		\quad t>0,
\end{equation}
for some constant $C>0$. Theorem~\ref{thm:Orlicz_reduction} then
guarantees \eqref{eq:MgAAng}.

Let us prove the optimality. Suppose that $L^B(\Rn)$ satisfies
\eqref{eq:MgAB}. Then, by Theorem~\ref{thm:Orlicz_reduction},
there is a constant $C>0$ such that
\begin{equation} \label{eq:ineqAB}
	\int_{0}^{t} \frac{B(s)}{s^{\ngnd+1}}\,\d s
		\le \frac{A_\ng(Ct)}{t^\ngnd},
		\quad t>0.
\end{equation}
Thus,
\begin{multline*}
	A_\ng(2Ct)
		\ge t^\ngnd \int_{0}^{2t} \frac{B(s)}{s^{\ngnd+1}}\,\d s
		\ge t^\ngnd \int_{t}^{2t} \frac{B(s)}{s^{\ngnd+1}}\,\d s
		\\
		\ge B(t)\,t^\ngnd \int_{t}^{2t} \frac{\d s}{s^{\ngnd+1}}
		\ge B(t) C_{n,\gamma}
\end{multline*}
for all $t>0$
and hence $A_\ng$ dominates $B$ which gives $L^{A_\ng}(\Rn) \to L^B(\Rn)$.

Suppose that $A$ satisfies \eqref{eq:Acond} and \eqref{eq:Angindex} fails.
Then, by \citep[Proposition 4.1]{Cia:18}, the inequality
\eqref{feb5} is violated for every $C>0$.
Note that in this case is $L^{A_\ng}(\Rn) \ne L^\infty(\Rn)$
and hence $A_\ng$ is finite-valued.
The failure of \eqref{feb5} occurs under one of these two conditions,
namely
\begin{equation} \label{feb6}
	\sup_{1<t<\infty} \frac{t^\ngnd}{A_\ng(Ct)} \int_{1}^{t} \frac{A_\ng(s)}{s^{\ngnd+1}}\,\d s
		= \infty
		\quad\text{for every $C>0$}
\end{equation}
or
\begin{equation} \label{feb7}
	\sup_{0<t<1} \frac{t^\ngnd}{A_\ng(Ct)} \int_{0}^{t} \frac{A_\ng(s)}{s^{\ngnd+1}}\,\d s
		= \infty
		\quad\text{for every $C>0$}.
\end{equation}
Assume that $B$ is a Young function such that \eqref{eq:MgAB} holds,
i.e., by Theorem~\ref{thm:Orlicz_reduction}, the inequality \eqref{eq:ineqAB} holds
for some $C>0$.
In both cases, we will show that there is a Young function $B_1$ such that
$L^{B_1}(\Rn) \subsetneq L^B(\Rn)$ and also
\begin{equation} \label{eq:MgAB1}
	M_\gamma\colon L^A(\Rn) \to L^{B_1}(\Rn).
\end{equation}
or equivalently, by Theorem~\ref{thm:Orlicz_reduction},
\begin{equation} \label{eq:ineqAB1}
	\int_{0}^{t} \frac{B_1(s)}{s^{\ngnd+1}}\,\d s
		\le \frac{A_\ng(Ct)}{t^\ngnd},
		\quad t>0,
\end{equation}
where $C$ is a possibly different constant.
If \eqref{feb6} holds, we modify the Young function $B$ only near infinity
and, similarly, if \eqref{feb7} holds, we do that near zero.

Let us start with the ``near infinity'' case.
First observe that $A_\ng(t)/t^\ngnd$ is nondecreasing,
by the same argument used in Remark~\ref{rem:AngYoung}. Thus
$A_\ng(t)/t^\ngnd$ is either bounded near infinity or
\begin{equation} \label{feb8}
	\lim_{t\to\infty} \frac{A_\ng(t)}{t^\ngnd} = \infty.
\end{equation}
In the former case, \eqref{eq:ineqAB} reads as
\begin{equation} \label{eq:ineqABzero}
	\int_{0}^{t} \frac{B(s)}{s^{\ngnd+1}}\,\d s
		\le \frac{A_\ng(Ct)}{t^\ngnd},
		\quad 0<t<1,
\end{equation}
and
\[
	\int_{1}^{\infty} \frac{B(s)}{s^{\ngnd+1}}\,\d s < \infty.
\]
By \citep[Theorem~6.4 (ii)]{Cia:98a},
there is a modification of $B$ on $(1,\infty)$, say $B_1$, such that $B_1$ essentially
dominates $B$ and also
\[
	\int_{1}^{\infty} \frac{B_1(s)}{s^{\ngnd+1}}\,\d s < \infty.
\]
Let us moreover set $B_1 = B$ on $(0,1)$.
Then \eqref{eq:ineqABzero} holds with $B$ replaced by $B_1$ and
hence \eqref{eq:ineqAB1} and therefore \eqref{eq:MgAB1}.

Now, assume \eqref{feb6} and \eqref{feb8}. The condition \eqref{eq:ineqAB} now splits
into \eqref{eq:ineqABzero} and
\begin{equation} \label{eq:ineqABinfty}
	\int_{1}^{t} \frac{B(s)}{s^{\ngnd+1}}\,\d s
		\le \frac{A_\ng(Ct)}{t^\ngnd},
		\quad 1<t<\infty.
\end{equation}
By \citep[Theorem~4.1]{Mus:16}, there exists a modified Young function $B_1$ such that
$B_1$ is essentially larger than $B$ and also satisfies \eqref{eq:ineqABinfty} with
$B_1$ in place of $B$. If we keep $B_1=B$ on $(0,1)$, then \eqref{eq:ineqABzero} remains valid
for $B_1$. That gives us \eqref{eq:ineqAB1} also in this case.

Let us now work ``near zero''. We again distinguish two cases, when $A_\ng(t)/t^\ngnd$
is equivalent to a constant function on $(0,1)$ and when
\begin{equation} \label{feb9}
	\lim_{t\to 0} \frac{A_\ng(t)}{t^\ngnd} = 0.
\end{equation}
In the constant case, \eqref{eq:ineqAB} boils down to \eqref{eq:ineqABinfty}
and
\[
	\int_{0}^{1} \frac{B(s)}{s^{\ngnd+1}}\,\d s < \infty.
\]
Lemma~\ref{lemm:construction_bounded} ensures
that there is an essentially larger Young function than $B$, $B_1$ say, such that
\[
	\int_{0}^{1} \frac{B_1(s)}{s^{\ngnd+1}}\,\d s < \infty.
\]
Let $B_1=B$ on $(1,\infty)$ and thus \eqref{eq:ineqABinfty} holds for $B$ replaced by $B_1$.
Therefore \eqref{eq:ineqAB1} also in this case.

Finally assume \eqref{feb7} and \eqref{feb9}. The inequality \eqref{eq:ineqAB} splits into
\eqref{eq:ineqABzero} and \eqref{eq:ineqABinfty}. On using Lemma~\ref{lemm:construction},
one gets a modified Young function $B_1$ which essentially enlarges $B$ on $(0,1)$
and still satisfies \eqref{eq:ineqABzero}. Again, if we set $B_1=B$ on $(0,\infty)$,
we obtain \eqref{eq:ineqAB1}.

Let us prove the ``in particular'' statement. The condition $I_A<\ng$ holds
if and only if $i_{\tilde{A}} > \ngn$ is satisfied.
Then, by \citep[Lemma~4.2 (ii)]{Cia:18},
\[
	\inf_{0<s\le t} \tilde{A}^{-1}(s)\, s^{\gn-1}
		\simeq \tilde{A}^{-1}(t)\, t^{\gn-1}
		\quad\text{for $t>0$}
\]
which, thanks to \eqref{eq:YoungCompl}, rewrites as
\[
	\sup_{0<s\le t} A^{-1}(s)\,s^{-\gn}
		\simeq A^{-1}(t)\,t^{-\ng}
		\quad\text{for $t>0$}.
\]
Thus, \eqref{eq:supout} follows by the definition
of $\Ang$ and by Remark~\ref{rem:AngYoung}.
Also, if $i_{\tilde{A}} > \ngn$, then
$i_\Ang>\ng$ is equivalent to $i_A>1$
due to \citep[Lemma~4.3]{Cia:18} and the duality.

The necessity of the condition \eqref{eq:Acond}
for the existence of any Orlicz space $L^B(\Rn)$
satisfying \eqref{eq:MgAB} follows by
Theorem~\ref{thm:Orlicz_reduction}.
\end{proof}

\begin{proof}[Proof of Theorem~\ref{thm:ORR}]
The boundedness of $M_\gamma$ in \eqref{eq:MgBngBopt}
and the optimality of the domain space is a consequence
of Theorem~\ref{thm:OOD}.

As for the range, set $A=B_\ng$ and let us compute $A_\ng$ by
\eqref{eq:AngDef}. We have
\[
	A_\ng^{-1}(t)
		\simeq G_\ng(t)
		= \sup_{0<s\le t} B_\ng^{-1}(s)\,s^{-\gn}
		\simeq \sup_{0<s\le t} F_\ng^{-1}(s)
		= F_\ng^{-1}(t),
		\quad t>0,
\]
whence $A_\ng$ is equivalent to $F_\ng$.
Therefore $i_{A_\ng} > \ngn$ if and only if
$i_{F_\ng} > \ngnd$ which is due to Lemma~\ref{lemm:inheritedind}
the same as $i_B > \ngn$.
The optimality of the range in \eqref{eq:MgBngBopt} is then
driven by the condition \eqref{eq:Bindex} thanks to Theorem~\ref{thm:OOT}.
\end{proof}

\begin{proof}[Proof of Theorem~\ref{thm:ODR}]
Let $A$ be a Young function such that \eqref{eq:Acond} holds true.
Then $\Asup$ satisfies \eqref{eq:Acond} as well
and the Young function $(\Asup)_\ng$ associated to $\Asup$
as in \eqref{eq:AngDef} satisfies
\begin{align*}
	(\Asup)_\ng^{-1}(t)
		& \simeq \sup_{0<s\le t} s^{-\ng}\Asupi(s) 
			\\
		& \simeq \sup_{0<s\le t} \sup_{0<y\le s} y^{-\ng}A^{-1}(y) 
			\\
		& = \sup_{0<y\le t} y^{-\ng}A^{-1}(y) 
		\simeq A_\ng^{-1}(t),
			\quad t>0,
\end{align*}
in other words, $(\Asup)_\ng$ is equivalent to $A_\ng$.
Therefore, by Theorem~\ref{thm:Orlicz_reduction}, criterion (iii),
\eqref{eq:MgAB} holds if and only if \eqref{eq:MgAsupB}.
Also,
\[
	(\Asup)_\ng^{-1}(t)
		\simeq t^{-\ng} \Asupi(t),
			\quad t>0,
\]
and thus
\[
	h_{(\Asup)_\ng}(t)
		= \sup_{s>0} \frac{(\Asup)_\ng^{-1}(st)}{(\Asup)_\ng^{-1}(s)}
		\simeq t^{-\ng} \sup_{s>0} \frac{\Asupi(st)}{\Asupi(s)}
		= t^{-\ng} h_{\Asup}(t),
		\quad t>0.
\]
Whence, by the definition of the lower Boyd index \eqref{BIdef},
\[
	\frac{1}{i_{(\Asup)_\ng}} + \gn = \frac{1}{i_{\Asup}}
\]
and therefore $i_{(\Asup)_\ng} > \ngn$ if and only if $i_\Asup > 1$.
The claim now follows by Theorem~\ref{thm:OOT}.
\end{proof}

%\bibliography{\jobname.bib}

\end{document}